\documentclass[12pt,a4paper]{article}
\usepackage{amsmath,amsthm,amssymb}
\usepackage{graphicx}
\usepackage{bbm}
\usepackage{enumitem}
\usepackage[a4paper, lmargin=20mm, rmargin=20mm, tmargin=30mm, bmargin=30mm]{geometry}
\usepackage[active]{srcltx}
\bibliographystyle{abbrv}
\usepackage{color}

\def \z{\mathbb Z}
\def \n{\mathbb N}
\def \1{\mathbbm 1}

\makeatletter
\newsavebox\myboxA
\newsavebox\myboxB
\newlength\mylenA

\renewcommand*\env@matrix[1][*\c@MaxMatrixCols c]{%
  \hskip -\arraycolsep
  \let\@ifnextchar\new@ifnextchar
  \array{#1}}

\newcommand*\xoverline[2][0.75]{%
    \sbox{\myboxA}{$\m@th#2$}%
    \setbox\myboxB\null
    \ht\myboxB=\ht\myboxA%
    \dp\myboxB=\dp\myboxA%
    \wd\myboxB=#1\wd\myboxA
    \sbox\myboxB{$\m@th\overline{\copy\myboxB}$}
    \setlength\mylenA{\the\wd\myboxA}
    \addtolength\mylenA{-\the\wd\myboxB}%
    \ifdim\wd\myboxB<\wd\myboxA%
       \rlap{\hskip 0.5\mylenA\usebox\myboxB}{\usebox\myboxA}%
    \else
        \hskip -0.5\mylenA\rlap{\usebox\myboxA}{\hskip 0.5\mylenA\usebox\myboxB}%
    \fi}
    
\newcommand\mpx[2]{
#1 \cdot\vert#1\vert_p\cdot\Vert#1#2\Vert
}   
\newcommand\length[1]{ |#1| }
    
\makeatother

\newcommand{\JB}[1]{{ #1}}

\let\phi\varphi

\newtheorem{theorem}{Theorem}[section]
\newtheorem{lemma}[theorem]{Lemma}

\newtheorem{proposition}[theorem]{Proposition}
\newtheorem{corollary}[theorem]{Corollary}
\theoremstyle{remark}
\newtheorem{remark}[theorem]{Remark}
\theoremstyle{definition}
\newtheorem{definition}[theorem]{Definition}
\newtheorem{example}[theorem]{Example}

\usepackage{mathtools}
\usepackage{textcomp}

\newcommand{\ve}{\varepsilon}


\title{A Note on the Base-$p$ Expansions of Putative Counterexamples to the $p$-adic Littlewood Conjecture}
\author{J.~Blackman, S.~Kristensen, and M.J.~Northey}

\begin{document}
\maketitle
\abstract{{In this paper, we investigate the base-$p$ expansions of putative counterexamples to the $p$-adic Littlewood {c}onjecture of de Mathan and Teuli\'e. We show that if a counterexample exists, then so does a counterexample whose base-$p$ expansion is uniformly recurrent. Furthermore, we show that if the base-$p$ expansion of $x$ is {a morphic word $\tau(\phi^\omega(a))$ where $\phi^\omega(a)$ contains a subword of the form $uXuXu$ with $\lim_{n\to\infty}|\phi^n(u)|=\infty$, then $x$ satisfies the $p$-adic Littlewood conjecture}. In the special case when $p=2$, we show that the conjecture holds for all pure morphic words.}}
\section{Introduction}

The $p$-adic Littlewood conjecture (pLC) is an open problem in Diophantine approximation, first proposed by de Mathan and Teuli\'e \cite{dMT:2004} in 2004, which states that for each prime number $p$ and all $x\in\mathbb{R}$ the following equality holds
\begin{equation}
\label{eq:pLC}
\liminf_{q \rightarrow \infty} \mpx{q}{x} = 0.
\end{equation}
Here, $\vert \cdot \vert_p$ denotes the $p$-adic absolute value, $\Vert \cdot \Vert$ denotes the distance to the nearest integer, and $q$ runs over the positive integers.  It follows trivially that if a real number $x$ is \textit{well-approximable}, \textit{i.e.}, 
$$\liminf_{q \rightarrow \infty} q\cdot{\Vert qx\Vert}=0,$$
then $x$ satisfies pLC, for all primes $p$.

In the paper that introduced this problem, de Mathan and Teuli\'e showed that (\ref{eq:pLC}) is equivalent to the condition that for each real number $x$ and all non-negative integers $k$, the partial quotients of $p^kx$ are not {uniformly bounded from above}.

\begin{lemma}\label{criterion}\emph{(\cite[Lemma~1.3]{dMT:2004})}
For each $k\in\mathbb{Z}_{\geq{0}}$, let $\overline{p^kx}=[a_{0,k};a_{1,k},\ldots]$ be the continued fraction expansion of $p^kx$. Then condition \emph{(\ref{eq:pLC})} is equivalent to
\begin{equation}\label{eq:pLC_partial_quotients}
\sup\{a_{i,k};i\geq{1},k\geq{0}\}=+\infty.
\end{equation}
\end{lemma}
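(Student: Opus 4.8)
The plan is to translate the $p$-adic weight $|q|_p$ into a dilation of $x$ by a power of $p$, and then to read off the size of $q\,|q|_p\,\Vert qx\Vert$ directly from the continued fraction expansions of the dilates $p^kx$. We may assume $x$ is irrational: if $x\in\mathbb{Q}$, then letting $q$ run through multiples of the denominator of $x$ makes $\Vert qx\Vert=0$, so (\ref{eq:pLC}) holds automatically, and the content of the statement lies in the irrational regime, where each $p^kx$ has an infinite continued fraction expansion. The linchpin is the elementary identity obtained by writing $q=p^kb$ with $p\nmid b$, so that $|q|_p=p^{-k}$ and $q\,|q|_p=b$:
\begin{equation*}
q\,|q|_p\,\Vert qx\Vert \;=\; b\,\Vert b\,(p^kx)\Vert .
\end{equation*}
Thus the functional in (\ref{eq:pLC}) is exactly the classical approximation quantity $b\mapsto b\,\Vert b\,y\Vert$ evaluated at the dilate $y=p^kx$.

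First I would prove the contrapositive of the implication $(\ref{eq:pLC})\Rightarrow(\ref{eq:pLC_partial_quotients})$. Suppose the supremum in (\ref{eq:pLC_partial_quotients}) is finite, say $a_{i,k}\le A$ for all $i\ge 1$, $k\ge 0$. I will invoke the standard estimates for the convergents $p_n/q_n$ of a real number $y$, namely $\tfrac{1}{q_n+q_{n+1}}<\Vert q_ny\Vert<\tfrac{1}{q_{n+1}}$ together with $q_{n+1}/q_n\in(a_{n+1},a_{n+1}+1]$, which give $q_n\Vert q_ny\Vert>\tfrac{1}{a_{n+1}+2}$, as well as the best-approximation property $\Vert by\Vert\ge\Vert q_ny\Vert$ whenever $1\le b<q_{n+1}$. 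Combining these, for any $b\ge 1$ I pick $n$ with $q_n\le b<q_{n+1}$ and obtain $b\,\Vert by\Vert\ge q_n\Vert q_ny\Vert>\tfrac{1}{a_{n+1}+2}$. Since every partial quotient of every dilate $y=p^kx$ is at most $A$, this produces the single bound $b\,\Vert b\,(p^kx)\Vert>\tfrac{1}{A+2}$, uniformly in both $b$ and $k$. By the identity above, $q\,|q|_p\,\Vert qx\Vert>\tfrac{1}{A+2}$ for every $q$, so the liminf in (\ref{eq:pLC}) is strictly positive, which is the required contrapositive.

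For the converse $(\ref{eq:pLC_partial_quotients})\Rightarrow(\ref{eq:pLC})$, suppose the supremum is $+\infty$ and choose indices $(i_j,k_j)$ with $a_{i_j,k_j}\to\infty$. The convergent denominator $q:=q_{i_j-1,k_j}$ of $p^{k_j}x$ satisfies $q\,\Vert q\,p^{k_j}x\Vert<1/a_{i_j,k_j}$ by the upper estimate above. Setting $Q_j=q\,p^{k_j}$ and using $|Q_j|_p\le p^{-k_j}$, I get $Q_j\,|Q_j|_p\,\Vert Q_jx\Vert\le q\,\Vert q\,p^{k_j}x\Vert<1/a_{i_j,k_j}\to 0$; a short case analysis (if the $k_j$ are unbounded use $Q_j\ge p^{k_j}$, otherwise a fixed dilate $p^kx$ must have unbounded partial quotients, forcing $i_j\to\infty$ and hence $q_{i_j-1,k}\to\infty$) lets me pass to a subsequence with $Q_j\to\infty$, so the liminf in (\ref{eq:pLC}) is $0$. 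The main obstacle is not any single inequality but the uniformity demanded in the first implication: hypothesis (\ref{eq:pLC_partial_quotients}) constrains a two-parameter family, and the crux is that one constant $A$ bounding all partial quotients of all dilates $p^kx$ simultaneously must be converted into a single positive lower bound $1/(A+2)$ valid for every $q$. Making the best-approximation inequality and the convergent estimates interact cleanly with the decomposition $q=p^kb$ is where the care is needed.
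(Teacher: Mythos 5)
Your proof is correct, and it is essentially the argument the paper relies on: the paper gives no proof of this lemma (it is cited from de Mathan--Teuli\'e), but immediately after stating it the paper invokes precisely the two-sided convergent estimates
$1/\bigl(\sup_{i\geq 1} a_{i,k}+2\bigr)\leq \inf_{q\geq 1} q\,\Vert qp^kx\Vert \leq 1/\sup_{i\geq 1} a_{i,k}$
from Burger, which, combined with your decomposition $q=p^kb$ with $p\nmid b$ and the identity $q\,|q|_p\,\Vert qx\Vert=b\,\Vert b\,(p^kx)\Vert$, is exactly the standard route you follow. Your explicit reduction to irrational $x$ is a sensible addition, since the stated equivalence implicitly assumes irrationality (for rational $x$ condition (\ref{eq:pLC}) holds while (\ref{eq:pLC_partial_quotients}) can fail).
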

 
In particular, the $p$-adic Littlewood conjecture is deeply connected to how the partial quotients of a real number behave under {iterative} prime multiplication. 
Note that since
$$\frac{1}{\sup\limits_{i\geq{1}}\left\{a_{i,k}\right\}+2}\leq \inf\limits_{q\geq{1}}\left\{ q\cdot\Vert qp^kx\Vert \right\} \leq\frac{1}{\sup\limits_{i\geq{1}}\left\{a_{i,k}\right\}},$$
 for {all} $k\in\mathbb{Z}_{\geq{0}}$ (see \cite[Ch.~7]{Bur:2000}), conditions (\ref{eq:pLC}) and (\ref{eq:pLC_partial_quotients}) are also equivalent to
\begin{equation}\label{prop:criterion}
\inf_{k \geq{0}} \inf_{q \geq{1}} q \cdot \Vert  q p^k x\Vert = 0.
\end{equation}

%

The main results regarding this conjecture can be broadly separated into two categories: 1) results which induce restrictions on the structure of the continued fraction expansions of potential counterexamples to pLC, and 2) results regarding the measure of the set of counterexamples to pLC and related objects.
Notable works regarding the continued fraction expansion of putative counterexamples to pLC include that of de Mathan and Teuli\'e \cite{dMT:2004}, which shows that quadratic irrationals satisfy pLC; Bugeaud, Drmota and de Mathan \cite{BDM:2007}, which shows that all real numbers which have arbitrarily many repetitions of a given finite block in their continued fraction expansion satsisfy pLC; and Badziahin, Bugeaud, Einsiedler and Kleinbock \cite{MR3406440}, which shows that the complexity function of the continued fraction expansion of a counterexample to pLC must grow sub-exponentially, {but the continued fraction expansion cannot be \textit{recurrent}, see below for a definition. In other words, the complexity function cannot grow too quickly or too slowly.} The main result regarding the measure of the set of potential counterexamples is that of Einsiedler and Kleinbock \cite{EK:2007}, which shows that for each prime $p$ the set of real numbers that do not satisfy (\ref{eq:pLC}) has Hausdorff dimension $0$. In fact, a stronger result was shown: this set is a countable union of sets which have box-counting dimension zero.
 
In this manuscript, instead of looking at the continued fraction expansions of potential counterexamples to pLC, we will look at the base-$p$ expansions (see Section~\ref{Section2}), which for the most part appear to have been largely unexplored.
Our main results are presented in Section~\ref{Section2}. In Section~\ref{pLC}, we look at the base-$p$ expansions of potential counterexamples to pLC and put restrictions on the type of repetitive blocks that can occur in these expansions. Furthermore, we show that if any counterexamples to pLC exist, then there exist counterexamples with uniformly recurrent base-$p$ expansions. In Section~\ref{2LC}, we utilise the results of Section~\ref{pLC} to analyse the 2-adic Littlewood conjecture. Due to the simpler alphabet, we are able to provide {stronger} results. In particular, we show that any real number with a pure morphic base-$2$ expansion satisfies $2$LC and that no counterexample to $2$LC can have arbitrarily long \textit{overlap-free} subwords -- see below. The proofs of the results of Section~\ref{pLC} are contained in Section~\ref{pLC_proofs} and the proofs for Section~\ref{2LC} are contained in Section~\ref{2LC_Proofs}.

\subsection{Notation}

Let $\mathcal{A}$ be a finite set which {we}  refer to as an \textit{alphabet} and let $\mathcal{A}^*$ be the set of all finite words over $\mathcal{A}$ including the empty word, which we denote as $\epsilon$. 
The set $\mathcal{A}^*$ forms a free monoid over $\mathcal{A}$ generated by concatenation. 
We denote the set of (right-sided) infinite {words} of $\mathcal{A}$ as $\mathcal{A}^\omega$, and denote {the} union of this set with $\mathcal{A}^*$ as $\mathcal{A}^\infty$. Given these notions, we define the \textit{length} $|\cdot|$ of a word $w\in\mathcal{A}^\infty$ to be the number of letters that appear in $w$, where $|\epsilon|=0$ and $|w|=\infty$ if $w\in\mathcal{A}^\omega$.

\begin{definition}
A finite word $w\in\mathcal{A}^*$ is an $\alpha$\textit{-power} if it can be written in the form $w=v^{\lfloor{\alpha\rfloor}}v'$ where ${\length{v'}/\length{v}\geq{\{\alpha\}:=\alpha-\lfloor{\alpha\rfloor}}}$. A word $w\in\mathcal{A}^\infty$ is \textit{overlap-free} if it contains no subword of the form $uXuXu$, where $u\in\mathcal{A}$ and $X\in\mathcal{A}^*$.
\end{definition}

{Note that a word contains {\JB{an}} overlap if and only if  it contains a subword that is a $(2+\delta)$-power for some $\delta>0$.}

\subsubsection{Morphic Words}

{An important class of words are the morphic words.  As a special case, these include all automatic words, \textit{i.e.}, words which can be generated by a finite automaton with output.}
{Let $\phi:\mathcal{A}\to\mathcal{A}^*$ be a morphism.} 

 If there is some natural number $j\geq{1}$ such that $\phi^j(a)=\epsilon$, for $a\in\mathcal{A}$, then $a$ is said to be \textit{mortal}. The set of mortal letters is denoted \JB{by} $M_\phi$. A morphism $\phi$ is \textit{prolongable} on the letter $a\in\mathcal{A}$, if $\phi(a)=ax$  and $x\not\in{M_\phi}\JB{^*}$. If a morphism is prolongable on $a$, then the words $a$, $\phi(a)$, $\phi^2(a), \ldots$ converge to an infinite word $\phi^\omega(a)$ of the form
\begin{equation}\label{morphicword}
\phi^\omega(a)=ax\cdot\phi(x)\cdot\phi^2(x)\cdot\ldots
\end{equation}
Any word that can be formed in this way is referred to as a \textit{pure morphic word}. If there is {a} coding $\tau:\mathcal{A}\to\mathcal{B}$ \JB{-- \textit{i.e.,} a morphism that maps letter to letter --} such that $w=\tau(\phi^\omega(a))$, then $w$ is referred to as a \textit{morphic word}. A morphism $\phi:\mathcal{A}\to\mathcal{A}^*$ is $k$\textit{-uniform} if $\length{\phi(a)}=k$  for all $a\in\mathcal{A}$ and is \textit{expanding} if $\length{\phi(a)}\geq{2}$ for all $a\in\mathcal{A}$. A morphism $\phi$ is \textit{primitive} if there exists some exponent $n\geq{1}$ such that for every $a,b\in\mathcal{A}$, the letter $b$ appears in the word $\phi^n(a)$ at least once.
\begin{example}
The \textit{Thue-Morse word} $M$ is the overlap-free, infinite word that is the limit $\mu^\omega(0)$ of the morphism $\mu:\{0,1\}\to\{0,1\}^*$  with $\mu(0):=01$ and $\mu(1):=10$. The first few letters are
$$M=0110100110010110\cdots.$$
{The complement of the Thue-Morse word $\widetilde{M}$ is the word given by $\mu^\omega(1)$.}
\end{example}



\section{Main Results}\label{Section2}

For every $x\in[0,1]$ and every natural number $n\geq{2}$, we can rewrite $x$ in the following form
$$x=\sum\limits_{i=1}^{\infty}a_in^{-i},$$
where $a_i\in\{0,1{,}\ldots,n-1\}$ for all $i\in\mathbb{N}$. {Unless the number $x$ is a rational number with denominator $n^k$ for some $k \ge 1$, this series expansion is unique. Since pLC is clearly satisfied for rational numbers, we will disregard this case and only consider real numbers that correspond to a unique sequence of digits.} The word formed by taking the coefficients of this power series is called the \textit{base}-$n$ \textit{expansion} of $x$. We denote this word as $w(x,n)$, \textit{i.e.}, $w(x,n):=a_1a_2\cdots$. {Conversely, g}iven a word $w\in\{0,1,\cdots,n-1\}^{\omega}$, we will denote the real number whose base-$n$ expansion coincides with $w$ as $w_n$. If $\{nx\}$ is the fractional part of $nx$, \textit{i.e.,} $\{nx\}:=nx-\lfloor{nx}\rfloor$, then the corresponding base-$n$ expansion is $T(a_1a_2a_3\cdots):=a_2a_3\cdots$. In particular, up to taking the number modulo $1$, the \textit{shift map} $T$ induces multiplication by $n$. More generally, the base-$n$ expansion of $\{n^kx\}$ corresponds to the word $T^k(a_1a_2a_3\cdots)=a_{k+1}a_{k+2}\cdots$.

Due to this structure, the base-$n$ expansion is very well-equipped for producing information regarding the limiting behaviour of a real point under repeated multiplication by $n$. Whilst the {rational approximations coming from}  the base-$n$ (or base-$p$) expansion are typically  worse
 than the {rational approximations coming from the} continued fraction expansion, in a number of cases this approximation is still good enough to induce restrictions on the potential counterexamples of pLC. On the other hand, whilst the continued fraction expansion gives a very good rational approximation of a real number, the integer multiplication of continued fractions is {far more complicated} -- see \cite{LS:98,Raney:1973}.

{For our purposes, it will also be useful to deal with \textit{base-$n$ representations} of integers. For any integer $a\geq{0}$, we can uniquely write $a$ as:
$$\sum_{i=1}^m a_in^{m-i},$$
with $a_i\in\{0,1,\ldots,n-1\}$ and $a_m\neq{0}$ (unless $m=1$).
The word $v(a,n)$ formed by taking the coefficients of this sum is the \textit{base-$n$ representation} of $a$. Given a finite, non-empty word $v$, let $v^{+}_n$ denote the integer whose base-$n$ representation coincides with $v$.}

\subsection{The $p$-adic Littlewood Conjecture}\label{pLC}

For a finite word $w$ on some alphabet $\mathcal{A}$ and a $\delta \in (0,1)$, we will denote the prefix of the word $w$ of length $\lfloor \delta \cdot\length{ w } \rfloor$ as $w^\delta$. Note that by construction, $www^\delta$ is an $\alpha$-power for all $\alpha{\leq}{2+{(\lfloor{\delta|w|\rfloor}/|w|)}}$. The following theorem shows that if the base-$p$ expansion of a real number $x$ has a {sequence of} subwords of the form  {$w_jw_jw_j^{\delta_j}$ with $\lim_{j\to\infty}\length{w_j^{\delta_j}}= \lim_{j\to\infty}\lfloor\delta_j\cdot|w_j|\rfloor=\infty$}, then $x$ satisfies pLC.

\begin{theorem}\label{Theorem:3_magic_no.}
Let $w=(a_n)_{n=1}^\infty$ be an infinite word on the alphabet $\{0,1,\dots, p-1\}$ satisfying the property that there is a sequence $(w_j)_{j=1}^\infty$ of finite words and a {sequence of positive real numbers $(\delta_j)_{j=1}^\infty$ {which are less than $1$}, } such that the word $w_j w_j w_j^{{\delta_j}}$ occurs as a subword in  $w$ {and $\lim_{j\to\infty}\length{w_j^{\delta_j}}=\infty$}. Then $w_p = \sum_{n=1}^\infty a_n p^{-n}$ satisfies the $p$-adic Littlewood conjecture.
\end{theorem}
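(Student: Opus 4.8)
The plan is to verify the criterion \eqref{prop:criterion}, namely that $\inf_{k\ge 0}\inf_{q\ge 1} q\,\Vert q p^k x\Vert = 0$ for $x=w_p$. The engine is the interaction between the shift map and multiplication by $p$: if the block $w_jw_jw_j^{\delta_j}$ begins at position $k_j+1$ of $w$, then $T^{k_j}(w)$ is the base-$p$ expansion of $x_j:=\{p^{k_j}x\}$, so $x_j$ has base-$p$ expansion beginning with $w_jw_jw_j^{\delta_j}$. Since $\Vert q x_j\Vert = \Vert q p^{k_j} x\Vert$ for every integer $q$ (the two differ by the integer $q\lfloor p^{k_j}x\rfloor$), it suffices to produce, for each $j$, an integer $q_j\ge 1$ with $q_j\,\Vert q_j x_j\Vert$ tending to $0$ along $j$.

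The key observation is that a long periodic prefix yields an excellent rational approximation whose denominator I can control. Write $\ell_j := |w_j|$ and let $B_j := (w_j)^{+}_{p}$ be the integer with base-$p$ representation $w_j$. The purely periodic real number $z_j$ whose base-$p$ expansion is $w_jw_jw_j\cdots$ equals $B_j/(p^{\ell_j}-1)$, so $q_j z_j\in\mathbb{Z}$ for $q_j := p^{\ell_j}-1$. Now $x_j$ and $z_j$ agree in their first $L_j := 2\ell_j + |w_j^{\delta_j}|$ base-$p$ digits: both begin with $w_jw_j$, and the following $|w_j^{\delta_j}|$ digits of $z_j$ form the prefix $w_j^{\delta_j}$ of the next period, matching $x_j$. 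Hence $|x_j-z_j|\le p^{-L_j}$.

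Combining these, since $q_j z_j$ is an integer,
\[
q_j\,\Vert q_j x_j\Vert = q_j\,\Vert q_j(x_j-z_j)\Vert \le q_j^2\,|x_j-z_j| \le (p^{\ell_j}-1)^2\,p^{-L_j} < p^{2\ell_j}\,p^{-(2\ell_j+|w_j^{\delta_j}|)} = p^{-|w_j^{\delta_j}|}.
\]
Because $|w_j^{\delta_j}|\to\infty$, the right-hand side tends to $0$, whence $\inf_{k\ge 0}\inf_{q\ge 1} q\,\Vert q p^k x\Vert=0$ and $x$ satisfies pLC. The one point requiring care --- and the reason the hypothesis is stated as a $(2+\delta)$-power rather than a mere square --- is the exponent bookkeeping: the two full periods contribute the factor $p^{-2\ell_j}$, which exactly cancels the $q_j^2\approx p^{2\ell_j}$ forced on us by using the denominator $p^{\ell_j}-1$, so a plain square $w_jw_j$ would only give the trivial bound $q_j\,\Vert q_j x_j\Vert\le 1$. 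It is precisely the excess $|w_j^{\delta_j}|$ beyond two periods, together with its divergence, that drives the product below every positive threshold. (Note that $\ell_j\ge |w_j^{\delta_j}|\to\infty$ since $\delta_j<1$, so $q_j\ge 1$ holds eventually, and degenerate cases such as $w_j=0^{\ell_j}$, for which $z_j=0$, cause no trouble since the same inequality chain applies.)
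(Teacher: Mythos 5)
Your proof is correct and follows essentially the same route as the paper's: shift by $p^{k_j}$ to place the block $w_jw_jw_j^{\delta_j}$ at the start, approximate by the purely periodic number with period $w_j$, and use the two full periods to cancel $q_j^2$ while the excess $\lfloor\delta_j|w_j|\rfloor$ digits drive $q_j\,\Vert q_j p^{k_j}x\Vert$ below $p^{-|w_j^{\delta_j}|}$. The only (harmless) difference is that you take $q_j=p^{|w_j|}-1$ outright instead of the reduced denominator, which avoids the paper's remark that the reduced denominator divides $p^{m_j}-1$.
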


Taking $(\delta_j)_{j=1}^\infty$ to be a constant sequence leads to the following corollary.

\begin{corollary}\label{Cor:3_magic_no.}
Assume $x$ is a counterexample to pLC and let $w(x,p)$ be the corresponding base-$p$ expansion. For each fixed $\alpha>{2}$, the length of the $\alpha$-powers appearing in $w(x,p)$ are bounded.
\end{corollary}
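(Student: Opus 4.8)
The plan is to verify that $x=w_p$ satisfies pLC through the criterion \eqref{prop:criterion}, that is, to produce pairs $(k,q)$ making $q\cdot\Vert qp^kx\Vert$ arbitrarily small. The engine is the elementary principle that a near-repetition in the base-$p$ expansion forces $x$, after an appropriate shift, to sit very close to a rational with denominator $p^L-1$, where $L$ is the period length. The hypothesis gives us infinitely many such near-repetitions of growing overlap, and I will extract one good approximation from each.

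Concretely, fix $j$ and suppose the block $w_jw_jw_j^{\delta_j}$ begins at position $k_j+1$ of $w$. Write $L_j=|w_j|$ and $s_j=|w_j^{\delta_j}|=\lfloor\delta_j L_j\rfloor$. Then $y_j:=\{p^{k_j}x\}$, whose base-$p$ expansion is the shifted word $T^{k_j}(w)$, begins with the $2L_j+s_j$ prescribed digits $w_jw_jw_j^{\delta_j}$. I would compare $y_j$ with the purely periodic number $r_j:=(w_j)^{+}_p/(p^{L_j}-1)$, whose expansion is $0.\overline{w_j}$. Since the first $2L_j+s_j$ digits of $0.\overline{w_j}$ are precisely $w_jw_jw_j^{\delta_j}$, the two numbers agree to that many places, so $|y_j-r_j|\le p^{-(2L_j+s_j)}$.

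Next I would take $q_j:=p^{L_j}-1$, the natural denominator that converts one period into an integer: $q_jr_j=(w_j)^{+}_p\in\mathbb{Z}$. Because $p^{k_j}x-y_j\in\mathbb{Z}$, we get $\Vert q_jp^{k_j}x\Vert=\Vert q_jy_j\Vert=\Vert q_j(y_j-r_j)\Vert\le q_j\,p^{-(2L_j+s_j)}$, and hence $q_j\cdot\Vert q_jp^{k_j}x\Vert\le q_j^2\,p^{-(2L_j+s_j)}<p^{2L_j}p^{-(2L_j+s_j)}=p^{-s_j}$. As $s_j\to\infty$ by hypothesis, the right-hand side tends to $0$; since the quantity in \eqref{prop:criterion} is nonnegative and bounded above by $\inf_j q_j\cdot\Vert q_jp^{k_j}x\Vert=0$, it vanishes, and so $x$ satisfies pLC.

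The point I expect to require the most care, and which carries the real content, is the exponent bookkeeping that forces the limit. A single copy $w_jw_j^{\delta_j}$ would yield agreement to only $L_j+s_j$ digits and hence the useless bound $p^{L_j-s_j}$; it is the \emph{second} full copy of $w_j$ that supplies the extra factor $p^{-L_j}$ exactly cancelling the growth $q_j^2\approx p^{2L_j}$, leaving the genuine decay $p^{-s_j}$ coming from the overlap. Thus the essential mechanism is that two full periods are needed to place $q_jy_j$ near an integer, while the growing overlap $s_j\to\infty$ is precisely what drives the product to zero. I would also note briefly that $L_j\to\infty$ (since $s_j=\lfloor\delta_jL_j\rfloor\to\infty$ with $\delta_j<1$), so each $q_j\ge1$ is a legitimate choice, and that the digit-agreement estimate $|y_j-r_j|\le p^{-(2L_j+s_j)}$ is robust against any boundary ambiguity in the expansions.
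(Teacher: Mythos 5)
Your proposal is correct and is essentially the paper's own argument: the paper obtains this corollary by applying Theorem~\ref{Theorem:3_magic_no.} with the constant sequence $\delta_j=\alpha-2$, and your estimate (approximating the shifted number by the periodic extension of $w_j$ with denominator $q_j=p^{L_j}-1$, agreement to $2L_j+s_j$ digits, giving $q_j\cdot\Vert q_jp^{k_j}x\Vert<p^{-s_j}$) is precisely the paper's proof of that theorem, inlined. The only step you pass over quickly is the routine translation from ``$\alpha$-powers of unbounded length for fixed $\alpha>2$'' to ``blocks $w_jw_jw_j^{\delta_j}$ with overlap length tending to infinity,'' which just uses $\lfloor\alpha\rfloor\geq 2$ and the fact that unbounded power length forces the period lengths $|w_j|$, and hence $\lfloor(\alpha-2)|w_j|\rfloor$, to be unbounded.
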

%


{Theorem~\ref{Theorem:3_magic_no.} can be generalised as follows.}

\begin{theorem}\label{Theorem:conjugate_pLC}
{Let $w=(a_n)_{n=1}^\infty$ be an infinite word on the alphabet $\{0,1,\dots, p-1\}$ that contains a sequence $(w_j)_{j=1}^\infty$ of finite words with $m_j=|w_j|$ and a {sequence of positive real numbers $(\delta_j)_{j=1}^\infty$} such that the word $w_jw_j^{{\delta_j}}$ occurs as a subword in  $w$.
Furthermore, let $(\ell_j)_{j=1}^\infty$ be the sequence of natural numbers satisfying
\begin{equation}\label{ineq:divisor}
p^{\ell_j-1}\leq\frac{p^{m_j}-1}{\gcd(p^{m_j}-1,(w_j)_p^+) }\leq p^{\ell_j}.
\end{equation}
  
If $\lim_{j\to\infty} m_j +\lfloor{m_j\delta_j\rfloor}-2\ell_j=\infty$, then $w_p$ satisfies pLC.
}
\end{theorem}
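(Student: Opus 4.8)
The plan is to apply the equivalent criterion (\ref{prop:criterion}), according to which $w_p$ satisfies pLC precisely when $\inf_{k\geq 0}\inf_{q\geq 1} q\cdot\Vert q p^k w_p\Vert = 0$, and to exhibit for each $j$ an explicit pair $(k_j,q_j)$ for which the product $q_j\Vert q_j p^{k_j}w_p\Vert$ tends to $0$ as $j\to\infty$. First I would use the dictionary between the shift map and multiplication by $p$: if $w_jw_j^{\delta_j}$ occurs in $w$ starting at position $k_j+1$, then $T^{k_j}w$, which is the base-$p$ expansion of $\{p^{k_j}w_p\}$, begins with $w_jw_j^{\delta_j}$.

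The engine of the argument is that a repeated block forces proximity to a rational with a controlled denominator. Let $\theta_j:=(w_j)_p^+/(p^{m_j}-1)$ be the real number whose base-$p$ expansion is the purely periodic word $\overline{w_j}=w_jw_jw_j\cdots$. Since $w_j^{\delta_j}$ is by definition the length-$\lfloor m_j\delta_j\rfloor$ prefix of $w_j$, the word $w_jw_j^{\delta_j}$ is a common prefix of $w(\{p^{k_j}w_p\},p)$ and of $\overline{w_j}$; hence the two numbers agree in their first $m_j+\lfloor m_j\delta_j\rfloor$ base-$p$ digits, which gives $|\{p^{k_j}w_p\}-\theta_j|\leq p^{-(m_j+\lfloor m_j\delta_j\rfloor)}$.

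The new ingredient compared with Theorem~\ref{Theorem:3_magic_no.} is to clear this fraction in lowest terms rather than using the crude denominator $p^{m_j}-1$. Set $d_j:=\gcd(p^{m_j}-1,(w_j)_p^+)$ and $q_j:=(p^{m_j}-1)/d_j$, so that $q_j\theta_j=(w_j)_p^+/d_j\in\mathbb{Z}$. Using $\Vert qp^k w_p\Vert=\Vert q\{p^k w_p\}\Vert$ together with subadditivity of $\Vert\cdot\Vert$ and the fact that $q_j\theta_j$ is an integer, I would estimate $\Vert q_j p^{k_j}w_p\Vert=\Vert q_j\{p^{k_j}w_p\}\Vert\leq q_j\,|\{p^{k_j}w_p\}-\theta_j|\leq q_j\,p^{-(m_j+\lfloor m_j\delta_j\rfloor)}$. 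Multiplying by the outer factor $q_j$ and invoking the defining bound $q_j\leq p^{\ell_j}$ from (\ref{ineq:divisor}) then yields $q_j\Vert q_j p^{k_j}w_p\Vert\leq q_j^2\,p^{-(m_j+\lfloor m_j\delta_j\rfloor)}\leq p^{\,2\ell_j-m_j-\lfloor m_j\delta_j\rfloor}$.

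Finally, the hypothesis $m_j+\lfloor m_j\delta_j\rfloor-2\ell_j\to\infty$ drives this exponent to $-\infty$, so $q_j\Vert q_j p^{k_j}w_p\Vert\to 0$, and (\ref{prop:criterion}) delivers pLC for $w_p$. I expect no serious obstacle here; the only points needing care are the two places where a factor of $q_j$ enters — one explicitly and one concealed inside $\Vert\cdot\Vert$ — since it is precisely these two factors, each bounded by $p^{\ell_j}$, that produce the term $2\ell_j$ in the exponent and thereby explain the shape of the hypothesis. It is also worth noting that specializing to $w_j$ a square (so that the reduced denominator essentially halves the digit count) recovers Theorem~\ref{Theorem:3_magic_no.}, which is a useful consistency check on the bound.
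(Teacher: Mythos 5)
Your proposal is correct and takes essentially the same route as the paper's own proof: shift by $p^{k_j}$ to the occurrence of $w_jw_j^{\delta_j}$, approximate by the purely periodic rational $(w_j)_p^+/(p^{m_j}-1)$ over the reduced denominator $q_j=(p^{m_j}-1)/\gcd\bigl(p^{m_j}-1,(w_j)_p^+\bigr)$, bound $q_j\leq p^{\ell_j}$ via \eqref{ineq:divisor}, and let the hypothesis send $q_j^2\,p^{-(m_j+\lfloor m_j\delta_j\rfloor)}$ to zero. The differences are cosmetic (you name the periodic limit $\theta_j$ and invoke integrality of $q_j\theta_j$ where the paper manipulates the geometric series and factors out the gcd), though note that your use of $\overline{w_j}$ for the periodic word clashes with the paper's use of the overline for digit complement.
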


\noindent
{In the above theorem, the three most useful cases are:
\begin{itemize}
\item when $\gcd(p^{m_j}-1,(w_j)_p^+)=1$, $\ell_j=m_j$, and $\lim_{j\to\infty} \lfloor{m_j\delta_j\rfloor}-m_j=\infty$ (Theorem~\ref{Theorem:3_magic_no.}),
\item when $m_j=2n_j$ with $n_j\in\mathbb{N}$, $\gcd(p^{m_j}-1,(w_j)_p^+)=p^{n_j}-1$, $\ell_j=n_j+1$ and $\lim_{j\to\infty} \lfloor{m_j\delta_j\rfloor}=\infty$, and
\item when $\lim_{j\to\infty}\delta_j=\infty$.
\end{itemize}     }

{As an example of how the second of the above bullet points can be used, given a word $w=b_1b_2\cdots b_n$ in $\{0,1,\ldots,p-1\}^*$, the integer $(w\overline{w})_p^+$ will always be divisible by $p^n-1$ where $\overline{b}=p-1-b$ for letter each $b$ in the alphabet $\{0,1,\ldots,p-1\}$. This follows since $$ \sum_{i=1}^{n}p^{n-i}\cdot\left[{p^{n}b_i+p-1-b_{i}}\right]=(p^{n}-1)+\sum_{i={1}}^{n}(p^{n}-1)p^{n-i}b_i
$$
 Thus, we obtain the following corollary.
}

\begin{corollary}\label{Cor:conjugate_pLC}
Let $w=(a_n)_{n=1}^\infty$ be an infinite word on the alphabet $\{0,1,\dots, p-1\}$ satisfying the property that there is a sequence $(w_j)_{j=1}^\infty$ of finite words and a {sequence of positive real numbers $(\delta_j)_{j=1}^\infty$} such that the word $w_j \overline{w_j} w_j^{{\delta_j}}$ occurs as a subword in  $w$ {and $\lim_{j\to\infty}\length{w_j^{\delta_j}}=\infty$}. Then $w_p$ satisfies the $p$-adic Littlewood conjecture.
\end{corollary}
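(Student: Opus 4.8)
The plan is to deduce Corollary~\ref{Cor:conjugate_pLC} from Theorem~\ref{Theorem:conjugate_pLC} by applying the latter not to the blocks $w_j$ but to the \emph{doubled} blocks $W_j := w_j\overline{w_j}$; this realizes the second of the three ``useful cases'' listed after Theorem~\ref{Theorem:conjugate_pLC}. First I would reinterpret the tail $w_j^{\delta_j}$ as a prefix of $W_j$. Since $w_j$ is a prefix of $W_j$ and $w_j^{\delta_j}$ is a prefix of $w_j$ of length $\length{w_j^{\delta_j}} < \length{w_j} < \length{W_j}$, the word $w_j^{\delta_j}$ is also a prefix of $W_j$. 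Setting $M_j := \length{W_j} = 2n_j$ with $n_j := \length{w_j}$, and choosing $\hat\delta_j := \length{w_j^{\delta_j}}/M_j \in (0,1)$, we get $\lfloor \hat\delta_j M_j\rfloor = \length{w_j^{\delta_j}}$, so that $W_j W_j^{\hat\delta_j} = w_j\overline{w_j}w_j^{\delta_j}$ is precisely the subword of $w$ furnished by the hypothesis. Thus the sequence $(W_j)$ meets the occurrence hypothesis of Theorem~\ref{Theorem:conjugate_pLC}.

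The key computation is to control the exponent $\ell_j$ defined for $W_j$ via \eqref{ineq:divisor}. Here I would invoke the divisibility identity recorded just before the corollary, written in the clean form
$$(W_j)_p^+ = (p^{n_j}-1)\left[(w_j)_p^+ + 1\right],$$
which shows $p^{n_j}-1$ divides $(W_j)_p^+$, hence $\gcd(p^{M_j}-1,(W_j)_p^+) \geq p^{n_j}-1$. Combining this with the factorization $p^{M_j}-1 = (p^{n_j}-1)(p^{n_j}+1)$ gives
$$\frac{p^{M_j}-1}{\gcd(p^{M_j}-1,(W_j)_p^+)} \leq p^{n_j}+1 \leq p^{n_j+1},$$
so from the definition \eqref{ineq:divisor} we obtain $\ell_j \leq n_j+1$.

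It then remains to verify the growth condition of Theorem~\ref{Theorem:conjugate_pLC} for $(W_j)$. Substituting $M_j = 2n_j$, $\lfloor M_j\hat\delta_j\rfloor = \length{w_j^{\delta_j}}$, and $\ell_j \leq n_j+1$, I would estimate
$$M_j + \lfloor M_j\hat\delta_j\rfloor - 2\ell_j \;\geq\; 2n_j + \length{w_j^{\delta_j}} - 2(n_j+1) \;=\; \length{w_j^{\delta_j}} - 2,$$
which tends to infinity precisely because $\lim_{j\to\infty}\length{w_j^{\delta_j}} = \infty$ by the hypothesis of the corollary. Theorem~\ref{Theorem:conjugate_pLC} applied to $(W_j)$ then yields that $w_p$ satisfies pLC.

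I expect the only delicate point to be the bookkeeping around the prefix: one must confirm that reindexing $w_j^{\delta_j}$ as a prefix of the longer word $W_j$ is legitimate (that $\length{w_j^{\delta_j}} < \length{W_j}$, so that $\hat\delta_j$ genuinely lies in $(0,1)$) and, crucially, that it is the growth of $\lfloor M_j\hat\delta_j\rfloor = \length{w_j^{\delta_j}}$ rather than of $\delta_j$ that drives the limit. The divisibility identity $p^{n_j}-1 \mid (W_j)_p^+$ is the other load-bearing ingredient, but it is already established in the text, so no further work is needed there.
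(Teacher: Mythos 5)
Your proposal is correct and follows exactly the route the paper intends: applying Theorem~\ref{Theorem:conjugate_pLC} to the doubled blocks $W_j = w_j\overline{w_j}$, using the divisibility identity $(w_j\overline{w_j})_p^+ = (p^{n_j}-1)\left[(w_j)_p^+ + 1\right]$ stated just before the corollary to force $\ell_j \leq n_j+1$, which is precisely the second bullet point listed after the theorem. Your version in fact fills in the bookkeeping (the reindexed exponent $\hat\delta_j$ and the reduction of the growth condition to $\length{w_j^{\delta_j}} - 2 \to \infty$) slightly more carefully than the paper, which leaves these details implicit.
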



Another property that can be deduced is that if a word $w$ contains a sequence of increasing prefixes of another word $v$ and $v_p$ satisfies pLC, then so does $w_p$.

\begin{proposition}\label{Prop:LimitWord}
Let $w,v\in\{0,1,\ldots,p-1\}^\omega$ and assume that there exists a sequence of prefixes $(v_k)_{k=1}^\infty$ of $v$ such that $\length{v_k}\to\infty$ and $v_k$ appears as a subword of $w$ for all $k$. If $v_p$ satisfies pLC, then so does $w_p$.
\end{proposition}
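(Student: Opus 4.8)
The plan is to use the reformulation (\ref{prop:criterion}) of pLC, namely that $w_p$ satisfies pLC if and only if $\inf_{k\geq 0}\inf_{q\geq 1} q\cdot\|qp^k w_p\|=0$, and to transfer a single good pair $(k,q)$ witnessing this infimum for $v_p$ over to $w_p$ by exploiting the shift action. The central observation is that multiplication by a power of $p$ followed by reduction modulo $1$ acts on the base-$p$ expansion as the shift $T$: if $v_k$ occurs in $w$ starting at position $s_k+1$, then $T^{s_k}(w)=a_{s_k+1}a_{s_k+2}\cdots$ has $v_k$ as a prefix, and the base-$p$ expansion of $\{p^{s_k}w_p\}$ is exactly $T^{s_k}(w)$. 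Since the base-$p$ expansion of $v_p$ also begins with $v_k$, the two reals agree on their first $\length{v_k}$ digits, giving the approximation
$$\bigl|\{p^{s_k}w_p\}-v_p\bigr|\leq p^{-\length{v_k}}.$$

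First I would fix $\varepsilon>0$ and invoke the assumption that $v_p$ satisfies pLC to obtain, via (\ref{prop:criterion}), a nonnegative integer $k_0$ and a positive integer $q$ with $q\cdot\|qp^{k_0}v_p\|<\varepsilon$; crucially, $k_0$ and $q$ are fixed before any choice of $k$ is made. Next I would record the algebraic identity
$$\|q p^{k_0+s_k}w_p\|=\bigl\|q p^{k_0}\{p^{s_k}w_p\}\bigr\|,$$
which holds because $qp^{k_0}\lfloor p^{s_k}w_p\rfloor$ is an integer and the distance to the nearest integer is unchanged by subtracting integers. Combining this with the subadditivity of $\|\cdot\|$ and the displayed approximation yields
$$q\cdot\|qp^{k_0+s_k}w_p\|\leq q\cdot\|qp^{k_0}v_p\|+q^2p^{k_0}\,p^{-\length{v_k}}<\varepsilon+q^2p^{k_0}\,p^{-\length{v_k}}.$$

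Finally, since $\length{v_k}\to\infty$ while $q$ and $k_0$ remain fixed, I would choose $k$ large enough that the error term $q^2p^{k_0}p^{-\length{v_k}}$ drops below $\varepsilon$, so that the pair $(k_0+s_k,\,q)$ witnesses $q\cdot\|qp^{k_0+s_k}w_p\|<2\varepsilon$. As $\varepsilon>0$ is arbitrary, this forces the double infimum in (\ref{prop:criterion}) to vanish, establishing pLC for $w_p$. The argument is largely mechanical; the only point demanding genuine care is the order of quantifiers — the good modulus $q$ and shift $k_0$ for $v_p$ must be selected first, so that the resulting tail error can subsequently be driven to zero by pushing $\length{v_k}$ up. I do not anticipate any serious obstacle beyond this bookkeeping, noting only that one should reduce to the case $w_p,v_p\in[0,1)$ at the outset, since subtracting integer parts affects neither hypothesis nor conclusion.
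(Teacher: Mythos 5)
Your proof is correct, and it takes a more direct route than the paper, although the key estimate underneath is the same. The paper proceeds by contraposition via Lemma~\ref{T1}: setting $m_p(x)=\liminf_{q\to\infty}\mpx{q}{x}$, it shows that if $w_p$ were a counterexample with $m_p(w_p)\geq\ve>0$, then \emph{every} point of the closure $\overline{T_p(w_p)}$ of the shift orbit inherits the same lower bound $\ve$; your hypothesis places $v_p$ in that closure, so $v_p$ could not satisfy pLC. You instead argue forwards, using the equivalent criterion~(\ref{prop:criterion}): fix one witness $(k_0,q)$ with $q\cdot\Vert qp^{k_0}v_p\Vert<\ve$ and transfer it to the pair $(k_0+s_k,q)$ for $w_p$. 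The engine is identical in both arguments: your inequality $q\cdot\Vert qp^{k_0}(v_p+\eta_k)\Vert\leq q\cdot\Vert qp^{k_0}v_p\Vert+q^2p^{k_0}\vert\eta_k\vert$ with $\vert\eta_k\vert\leq p^{-\length{v_k}}$ is the paper's estimate $\mpx{q_n}{(y+\Delta_n)}\leq\mpx{q_n}{y}+q_n\cdot\vert q_n\Delta_n\vert$, and your care with quantifier order (fix $(k_0,q)$ first, then drive $\length{v_k}\to\infty$) mirrors the paper's choice $\delta_n=2^{-1}q_n^{-2}(\ve-\ve')$. The trade-off between the two routes: yours is leaner, dispensing with the topological closure, the contrapositive, and all $p$-adic bookkeeping, since criterion~(\ref{prop:criterion}) absorbs the factor $\vert q\vert_p$ into the exponent of $p$; the paper's Lemma~\ref{T1} is deliberately stronger, preserving the quantitative bound $\ve$ uniformly over all limit points of the orbit at once, which is exactly what its minimal-invariant-subset argument for Theorem~\ref{Theorem:unif_rec_CE} (existence of uniformly recurrent counterexamples) then requires. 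Your single-witness transfer proves the proposition, but would not by itself yield that uniform statement.
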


{An infinite word $w = (a_n)_{n=1}^\infty$ is said to be \emph{recurrent} if any finite subword $v$ of $w$ occurs infinitely often in $w$. It is said to be \emph{uniformly recurrent} if for every  finite subword $v$ of $w$, there exists a constant $N_v$ such that $v$ appears in every subword of $w$ of length $N_v$.} Using \JB{an idea similar} to the work of Badziahin \cite{Badziahin:2014aa} on ``limit words'' of continued fraction expansions, we can look at the topological closure of the set of base-$p$ expansions of the counterexamples to pLC under the action of the shift map. This allows us to  deduce that if this set is non-empty then it contains an element with a uniformly recurrent base-$p$ expansion.

\begin{theorem}\label{Theorem:unif_rec_CE}
If there is a counterexample to pLC, there is a counterexample with a uniformly recurrent base-$p$ expansion.
\end{theorem}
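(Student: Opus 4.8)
The plan is to interpret the base-$p$ expansions of counterexamples carrying a \emph{fixed} Diophantine constant as a subshift of the full shift on $\{0,1,\dots,p-1\}^\omega$, and then to exploit the standard fact from topological dynamics that a non-empty closed shift-invariant set contains a minimal subsystem, every point of which is uniformly recurrent. Suppose, then, that a counterexample to pLC exists. By the criterion (\ref{prop:criterion}) there is a real number $x$ and a constant $c>0$ with $q\Vert qp^kx\Vert\geq c$ for all $k\geq 0$ and $q\geq 1$. I would set
$$
\Omega_c:=\bigl\{w\in\{0,1,\dots,p-1\}^\omega : q\Vert qp^k w_p\Vert\geq c \text{ for all } k\geq 0,\ q\geq 1\bigr\}.
$$
Then $w(x,p)\in\Omega_c$, so $\Omega_c\neq\emptyset$, and by (\ref{prop:criterion}) every $w\in\Omega_c$ has the property that $w_p$ is a counterexample to pLC. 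The theorem will follow once I show that $\Omega_c$ is closed and shift-invariant and then pass to a minimal subsystem.

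To establish closedness, I would first observe that the evaluation map $w\mapsto w_p=\sum_{n\geq 1}a_np^{-n}$ is continuous on the (compact) full shift, since its partial sums depend on finitely many coordinates and converge uniformly. Composing with the continuous map $t\mapsto q\Vert qp^k t\Vert$ shows that, for each fixed pair $(k,q)$, the set $\{w : q\Vert qp^k w_p\Vert\geq c\}$ is closed. As $\Omega_c$ is the intersection of these sets over all $k\geq 0$ and $q\geq 1$, it is closed, hence compact. It is here that fixing a single constant $c$ is essential: the set of all counterexamples, being a union over all $c>0$, need not be closed.

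For shift-invariance, take $w\in\Omega_c$ and recall that $(Tw)_p=\{pw_p\}$, so that $p^k(Tw)_p$ and $p^{k+1}w_p$ differ by an integer for every $k\geq 0$. Consequently $q\Vert qp^k(Tw)_p\Vert=q\Vert qp^{k+1}w_p\Vert\geq c$ for all $k\geq 0$ and $q\geq 1$, whence $Tw\in\Omega_c$. The key structural point is that applying the shift merely deletes the $k=0$ layer from the double infimum in (\ref{prop:criterion}), so the uniform lower bound $c$ is preserved; this is exactly where the appearance of \emph{all} powers $p^k$ in pLC is used, and it is the step I expect to carry the real content of the argument, since it would fail for a one-scale Diophantine condition.

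Finally, $\Omega_c$ is a non-empty closed shift-invariant subset of a compact system, so by a Zorn's lemma argument (using compactness to ensure that a descending chain of non-empty closed invariant sets has non-empty intersection) it contains a minimal non-empty closed shift-invariant subset $Y$. For any $w\in Y$, minimality forces the orbit closure $\overline{\{T^nw:n\geq 0\}}$ to equal $Y$, which is precisely the combinatorial statement that $w$ is uniformly recurrent. Since $w\in\Omega_c$, its value $w_p$ is a counterexample to pLC; as all rationals satisfy pLC, $w_p$ is irrational, so $w$ is genuinely its base-$p$ expansion $w(w_p,p)$. Thus $w_p$ is a counterexample to pLC with uniformly recurrent base-$p$ expansion. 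Apart from the shift-invariance step flagged above, the remaining ingredients are the routine continuity underlying closedness and the standard equivalence between points of a minimal subshift and combinatorial uniform recurrence.
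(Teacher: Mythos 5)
Your proof is correct, and it follows the same high-level strategy as the paper (compactness plus the fact that minimal subshifts consist of uniformly recurrent points, citing in effect the same Lothaire-type theorem), but the way you produce the compact invariant set is genuinely different. The paper fixes a counterexample $x$, takes the closure $\overline{T_p(x)}$ of its orbit under multiplication by $p$ (equivalently the shift-orbit closure of $w(x,p)$), and then must prove Lemma~\ref{T1}: a quantitative approximation argument showing that the Diophantine lower bound $\varepsilon$ survives passage to limit points of the orbit. You instead define the global level set $\Omega_c$ of \emph{all} words whose associated reals obey the uniform bound $c$ from criterion (\ref{prop:criterion}); closedness is then soft (preimages of closed sets under the continuous maps $w\mapsto q\Vert qp^k w_p\Vert$), and invariance is the trivial observation that shifting deletes the $k=0$ layer. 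This buys you simplicity -- no explicit $\delta_n$-estimates, and no need to handle the $\liminf$-type quantity $m_p$ that the paper's lemma wrestles with. What the paper's formulation buys in exchange is localisation: its uniformly recurrent counterexample lies in $\overline{T_p(x)}$, so every subword of its expansion already occurs in $w(x,p)$, which ties into Proposition~\ref{Prop:LimitWord} and the paper's broader program of constraining the expansions of a \emph{given} counterexample; your $\Omega_c$ forgets that connection, though it fully suffices for the theorem as stated. Two small points: your identity $(Tw)_p=\{pw_p\}$ fails for the exceptional word ending in all $(p-1)$'s, but the integer-difference conclusion you actually use, $(Tw)_p = pw_p - a_1$, holds unconditionally (and such words give rationals, which never lie in $\Omega_c$ anyway); and your remark disposing of the two-expansions ambiguity via irrationality of counterexamples is a detail the paper glosses over but handles implicitly by its standing convention.
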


\begin{remark}
It is worth noting that none of the above statements rely on $p$ being prime other than to link to the $p$-adic Littlewood conjecture. In particular, we can replace $p$ with a composite number $n$ to obtain analogous results on the ``$n$-adic Littlewood conjecture''.
\end{remark}

The proof of Theorems~\ref{Theorem:3_magic_no.} and~\ref{Theorem:conjugate_pLC} can be found in Section~\ref{Proof:increasing_factors}. The proof of Proposition~\ref{Prop:LimitWord} and Theorem~\ref{Theorem:unif_rec_CE} is in Section~\ref{Proof:Subwords}.

\subsubsection{Results on Morphic Words}

Let $w=\phi^\omega(a)$ be a pure morphic word.  If the prefix $\phi^k(a)$ contains overlap of the form $uXuXu$ for some $k\in\mathbb{N}$, then $\phi^n(u)\phi^n(X)\phi^n(u)\phi^n(X)\phi^n(u)$ is a subword of $\phi^{k+n}(a)$ for all $n\in\mathbb{N}$. Under the assumption that $u$ is not mortal for $\phi$, infinitely many instances of overlap occur. Furthermore, if $\lim_{n\to\infty}\length{\phi^n(u)}=\infty$, the word satisfies the conditions of Theorem~\ref{Theorem:3_magic_no.}. This leads to the following proposition.

\begin{proposition}\label{Proposition:prim_or_uniform}
{Let $w=\phi^\omega({a})\in\mathcal{A}^\omega$ be a pure morphic word containing a subword $uXuXu$ such that $\lim_{n\to\infty}\length{\phi^n(u)}=\infty$. For any non-erasing morphism $g:\mathcal{A}\to\{0,1,\ldots,p-1\}$, the real number $g(w)_p$ satisfies the $p$-adic Littlewood conjecture.}
\end{proposition}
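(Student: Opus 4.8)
The plan is to reduce the statement to a single application of Theorem~\ref{Theorem:3_magic_no.}: I will exhibit, inside the base-$p$ word $g(w)$, a sequence of subwords of the form $w_jw_jw_j^{\delta_j}$ whose prefix parts $w_j^{\delta_j}$ grow without bound. The engine of the argument is exactly the remark recorded just before the proposition: a morphism is a homomorphism of the free monoid, so it distributes over concatenation and therefore reproduces overlaps at every scale.

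First I would locate the overlap inside a finite prefix. Since $\phi$ is prolongable on $a$, the words $\phi^k(a)$ form an increasing chain of prefixes converging to $w=\phi^\omega(a)$; hence the finite subword $uXuXu$ of $w$ lies inside $\phi^{k_0}(a)$ for some $k_0$. Applying $\phi^n$ and using $\phi^n(\phi^{k_0}(a))=\phi^{k_0+n}(a)$, which is again a prefix of $w$, I conclude that
$$\phi^n(u)\,\phi^n(X)\,\phi^n(u)\,\phi^n(X)\,\phi^n(u)$$
is a subword of $w$ for every $n\in\mathbb{N}$, because $\phi^n$ is a morphism and so $\phi^n(uXuXu)$ factors as displayed. (Observe that $\lim_{n\to\infty}|\phi^n(u)|=\infty$ already forces $u$ to be non-mortal, so $\phi^n(u)\neq\epsilon$ throughout.)

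Next I would push everything through $g$. Since $g$ is a morphism,
$$g(\phi^n(u))\,g(\phi^n(X))\,g(\phi^n(u))\,g(\phi^n(X))\,g(\phi^n(u))$$
is a subword of $g(w)$. Writing $w_n:=g(\phi^n(u))\,g(\phi^n(X))$, this word equals $w_nw_n\,g(\phi^n(u))$, and as $\phi^n(u)$ is a prefix of $\phi^n(u)\phi^n(X)$, the word $g(\phi^n(u))$ is a prefix of $w_n$. Thus, whenever $g(\phi^n(X))\neq\epsilon$, the subword has exactly the shape $w_nw_nw_n^{\delta_n}$ with $w_n^{\delta_n}=g(\phi^n(u))$ and $\delta_n=|g(\phi^n(u))|/|w_n|\in(0,1)$. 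The crucial point is that $g$ is non-erasing, so $|g(\phi^n(u))|\geq|\phi^n(u)|$, and the hypothesis $\lim_{n\to\infty}|\phi^n(u)|=\infty$ then gives $\lim_{n\to\infty}|w_n^{\delta_n}|=\infty$. Theorem~\ref{Theorem:3_magic_no.} applies and yields that $g(w)_p$ satisfies pLC.

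The only point requiring care is the degenerate case $g(\phi^n(X))=\epsilon$, equivalently $\phi^n(X)=\epsilon$ by non-erasingness, in which the displayed word is the cube $g(\phi^n(u))^3$. There I would instead take $w_n:=g(\phi^n(u))$ with its prefix of length $\lfloor|g(\phi^n(u))|/2\rfloor$ (so $\delta_n=1/2$); this prefix occurs inside the cube and still has length tending to infinity, so Theorem~\ref{Theorem:3_magic_no.} again applies. I expect this boundary bookkeeping — guaranteeing that $\delta_n$ can be chosen strictly below $1$ — to be the only genuine obstacle; everything else is the formal fact that homomorphisms commute with the formation of overlaps.
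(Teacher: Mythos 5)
Your proof is correct and takes essentially the same route as the paper: propagate the overlap through $\phi^n$ to get $\phi^n(u)\phi^n(X)\phi^n(u)\phi^n(X)\phi^n(u)$ inside $w$ at every scale, push it through the non-erasing morphism $g$, and apply Theorem~\ref{Theorem:3_magic_no.} with $w_n=g(\phi^n(u))g(\phi^n(X))$ and prefix part $g(\phi^n(u))$, whose length tends to infinity by non-erasingness. Your separate treatment of the degenerate case $\phi^n(X)=\epsilon$ (where one must avoid $\delta_n=1$ by taking a cube with $\delta_n=1/2$) is a small piece of bookkeeping the paper's sketch glosses over, not a different approach.
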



\begin{remark}
{Here we should note that the condition $\lim_{n\to\infty}\length{\phi^n(u)}=\infty$ is instantly satisfied for morphisms which are expanding, including (powers of) primitive morphisms and $k$-uniform morphisms for $k\geq{2}$. Furthermore, due to a result of Durand \cite{Durand:2013}, all uniformly recurrent morphic words are primitive morphic. Therefore, if $x$ is a counterexample to pLC with a morphic uniformly recurrent base-$p$ expansion of the form $\tau(\phi^\omega(a))$, then the underlying pure morphic word $\phi^\omega(a) $ must be overlap-free.}
\end{remark}

{Similar to the previous argument, if a morphism $\phi$ is prolongable on the letters $a,b\in\mathcal{A}^*$ and $b$ appears in the word $\phi^\omega(a)$ at least once, then every prefix of $\phi^\omega(b)$ appears in $\phi^\omega(a)$. Proposition~\ref{Prop:LimitWord} then directly implies the following corollary.}

\begin{corollary}\label{Corollary:Morphic_subalphabets}
Let $w=\phi^\omega(a)$ be a pure morphic word over $\mathcal{A}$ and let $\mathcal{B}$ be a sub-alphabet of $\mathcal{A}$ such that $\phi:\mathcal{B}\to\mathcal{B}^*$. Furthermore, assume that $\phi^\omega(a)$ contains a letter $b\in\mathcal{B}$ such that $\phi$ is prolongable over $b$ and let $\tau:\mathcal{A}\to\{0,1,\ldots,p-1\}$ be a coding. If $\tau(\phi^\omega(b))_p$ satisfies pLC, then so does $\tau(\phi^\omega(a))_p$.
\end{corollary}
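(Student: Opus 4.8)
The plan is to reduce the statement to Proposition~\ref{Prop:LimitWord} by exhibiting a sequence of prefixes of $\tau(\phi^\omega(b))$ whose lengths tend to infinity and which all occur as subwords of $\tau(\phi^\omega(a))$. First I would record that, since $\phi$ is prolongable on $a$, the word $\phi^\omega(a)$ is a fixed point of $\phi$, so $\phi^n(\phi^\omega(a))=\phi^\omega(a)$ for every $n\geq 0$. Because the letter $b$ occurs in $\phi^\omega(a)$, we may write $\phi^\omega(a)=s\,b\,t$ with $s$ a finite word; applying $\phi^n$ and using the fixed-point identity gives $\phi^\omega(a)=\phi^n(s)\,\phi^n(b)\,\phi^n(t)$, so $\phi^n(b)$ occurs as a subword of $\phi^\omega(a)$ for every $n$.

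Next I would exploit prolongability on $b$. Writing $\phi(b)=by$ with $y\notin M_\phi^*$, the words $\phi^n(b)$ form an increasing chain of prefixes converging to $\phi^\omega(b)$, so each $\phi^n(b)$ is a prefix of $\phi^\omega(b)$. The condition $y\notin M_\phi^*$ forces $y$ to contain a non-mortal letter, whence $\phi^k(y)\neq\epsilon$ for all $k$ and therefore $\length{\phi^n(b)}\to\infty$. (The hypothesis $\phi:\mathcal{B}\to\mathcal{B}^*$ is not needed for the core argument; it serves only to guarantee that $\phi^\omega(b)$ is a well-defined pure morphic word over $\mathcal{B}$, so that the assumption on $\tau(\phi^\omega(b))_p$ refers to a genuine object.)

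Then I would transport everything through the coding $\tau$. Since $\tau$ is letter-to-letter, it is length-preserving and carries prefixes to prefixes and subword occurrences to subword occurrences. Setting $v_k:=\tau(\phi^k(b))$, $v:=\tau(\phi^\omega(b))$, and $w:=\tau(\phi^\omega(a))$, each $v_k$ is a prefix of $v$ with $\length{v_k}=\length{\phi^k(b)}\to\infty$, and each $v_k$ appears as a subword of $w$. Since $v_p=\tau(\phi^\omega(b))_p$ satisfies pLC by hypothesis, Proposition~\ref{Prop:LimitWord} applied to $w$ and $v$ immediately yields that $w_p=\tau(\phi^\omega(a))_p$ satisfies pLC, as desired.

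I do not expect a serious obstacle, as the argument is essentially bookkeeping built on the earlier proposition. The only point demanding care is the first step: that a single occurrence of the letter $b$ in the fixed point propagates, under iteration of $\phi$, to honest occurrences of the \emph{growing} blocks $\phi^n(b)$. This is precisely where the fixed-point identity $\phi^n(\phi^\omega(a))=\phi^\omega(a)$ is essential, together with the elementary fact that a morphism sends a subword occurrence to a subword occurrence of its image.
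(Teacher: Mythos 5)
Your proof is correct and takes essentially the same approach as the paper: the paper likewise notes that since $b$ occurs in the fixed point $\phi^\omega(a)$, the growing prefixes $\phi^n(b)$ of $\phi^\omega(b)$ all occur as subwords of $\phi^\omega(a)$, and then applies Proposition~\ref{Prop:LimitWord}. Your write-up merely fills in details the paper leaves implicit (the fixed-point identity, the growth of $\length{\phi^n(b)}$ via non-mortality of a letter in $y$, and transporting everything through the coding $\tau$).
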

%
%
%
%

%
%

\subsection{Applications to the 2-adic Littlewood Conjecture}\label{2LC}

In the case of the $2$-adic Littlewood conjecture, all pure morphic words satisfy at least one of three properties \textbf{(P1)-(P3)} -- see Lemma~\ref{Lemma:BinaryPureMorphic} below. Combining this result with Theorem~\ref{Theorem:3_magic_no.} and other results in the literature leads to the following theorem.

\begin{theorem}\label{Theorem:pure_morphic_2LC}
Let $x\in[0,1]$ and assume that the corresponding base-$2$ expansion $w(x,2)$ is a pure morphic word. Then $x$ satisfies $2$LC.
\end{theorem}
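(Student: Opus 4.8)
The plan is to reduce to a genuinely binary morphism and then dispatch each class of the classification in Lemma~\ref{Lemma:BinaryPureMorphic} to the appropriate tool from Section~\ref{pLC}. First I would normalise the data: writing $w(x,2)=\phi^\omega(a)$, the set $\mathcal{A}'$ of letters actually occurring in $\phi^\omega(a)$ is a subset of $\{0,1\}$, and since $\phi$ fixes $\phi^\omega(a)$ it maps $\mathcal{A}'$ into $(\mathcal{A}')^*$. If $|\mathcal{A}'|=1$ the word is eventually constant, so $x$ is rational and satisfies pLC trivially; otherwise $\mathcal{A}'=\{0,1\}$ and we may assume $\phi:\{0,1\}\to\{0,1\}^*$ is prolongable on $a$. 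Note that $x=w(x,2)_2$, so it suffices to prove that $w(x,2)_2$ satisfies $2$LC.

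Next I would invoke Lemma~\ref{Lemma:BinaryPureMorphic} to split into the three properties (P1)--(P3). The strategy is to arrange the lemma so that each property hands us exactly the hypothesis of one of the earlier results: a class containing arbitrarily long squares $w_jw_jw_j^{\delta_j}$ with $\length{w_j^{\delta_j}}\to\infty$ is settled by Theorem~\ref{Theorem:3_magic_no.}; a class containing arbitrarily long complement blocks $w_j\overline{w_j}w_j^{\delta_j}$ with $\length{w_j^{\delta_j}}\to\infty$ is settled by Corollary~\ref{Cor:conjugate_pLC} (recall that for $p=2$ the complement $\overline{b}=1-b$ is merely the bit-flip); and the remaining eventually periodic class gives a rational $x$, for which pLC is immediate. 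Since $x=w(x,2)_2$ in every case, the theorem follows once the lemma is in hand.

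The substance therefore lies in the classification lemma, and that is where I expect the main obstacle. The delicate case is an overlap-free binary pure morphic word that is not eventually periodic: by the classical theory of overlap-free binary words (Thue, and the morphism characterisations of S\'e\'ebold), such a word is forced to be the Thue--Morse word $M$ or its complement $\widetilde{M}$, and in particular to have a self-similar block structure. I would exploit this directly: reading $\phi^\omega(a)$ in blocks of length $2^n$ reproduces a Thue--Morse-type sequence over $\{B_0,B_1\}$ with $B_1=\overline{B_0}$, and the factor $010$ of that sequence yields the subword $B_0B_1B_0=w\overline{w}w$ with $w=\mu^n(0)$ and $\length{w}=2^n\to\infty$. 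Taking $\delta_j=\tfrac12$ gives $\length{w_j^{\delta_j}}\to\infty$, placing the word in the complement class so that Corollary~\ref{Cor:conjugate_pLC} applies.

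The routine-but-careful points I would still need to verify are: that the reduction to a binary morphism does not destroy prolongability; that mortal or length-bounded letters (which for a binary prolongable morphism can only occur in controlled ways) do not obstruct the growth condition $\length{w_j^{\delta_j}}\to\infty$; and that the non-overlap-free classes genuinely produce square or complement factors whose inner word length tends to infinity rather than remaining bounded, since only unbounded inner length triggers Theorem~\ref{Theorem:3_magic_no.} and Corollary~\ref{Cor:conjugate_pLC}. Establishing the trichotomy of Lemma~\ref{Lemma:BinaryPureMorphic} exhaustively, and handling the overlap-free case through the Thue--Morse block structure, is the heart of the argument; the rest is bookkeeping that feeds the three classes into the already-proved results.
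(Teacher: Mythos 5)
Your proposal is correct, and on the decisive case it takes a genuinely different route from the paper. Both arguments rest on the same classification of binary pure morphic words (Lemma~\ref{Lemma:BinaryPureMorphic}, proved via S\'e\'ebold's theorem plus a case analysis of the morphism), and both dispatch the overlap class \textbf{(P3)} through Theorem~\ref{Theorem:3_magic_no.} in the guise of Proposition~\ref{Proposition:prim_or_uniform}. Two differences remain. First, for the class \textbf{(P2)} the paper does not claim eventual periodicity -- and indeed \textbf{(P2)} words need not be eventually periodic (e.g.\ $\phi(0)=001$, $\phi(1)=1$ gives a non-periodic word with unbounded runs of $1$'s); the paper instead notes that $(v^\omega)_2$ is rational and applies Proposition~\ref{Prop:LimitWord}. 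Your labelling of this class as ``eventually periodic'' is therefore a mischaracterisation, but a harmless one: a word containing $v^n$ for every $n$ contains the cubes $(v^j)^3$, hence lands in your squares class, and Theorem~\ref{Theorem:3_magic_no.} finishes it. The substantive divergence is \textbf{(P1)}: the paper invokes the Badziahin--Zorin theorem (Theorem~\ref{Theorem:TM_bad_approx}) that the Thue--Morse constant is well-approximable, a deep external Diophantine input, whereas you exploit $M=\mu^n(M)$, the identity $\mu^n(1)=\overline{\mu^n(0)}$, and the factor $010$ of $M$ to exhibit the subwords $w\overline{w}w$ with $w=\mu^n(0)$ and $\length{w}=2^n\to\infty$, so that Corollary~\ref{Cor:conjugate_pLC} (say with $\delta_j=1/2$) applies; the complement $\widetilde{M}$ is handled symmetrically via its factor $101$. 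This makes your proof of Theorem~\ref{Theorem:pure_morphic_2LC} self-contained modulo S\'e\'ebold, replacing an external theorem by the paper's own elementary rational-approximation machinery. What the paper's heavier input buys is generality elsewhere: well-approximability of $M_n$ survives codings and changes of base, which is exactly what Proposition~\ref{TM_general_bad_approx} and Corollary~\ref{Cor:pure_morphic_2LC} require; in that setting the two level-$n$ blocks $\tau(\mu^n(0))$ and $\tau(\mu^n(1))$ are in general no longer $p$-complements of one another, so your block trick does not extend there, while Badziahin--Zorin does.
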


This theorem can be extended to a class of results regarding pLC by applying Corollary~\ref{Corollary:Morphic_subalphabets}.

\begin{corollary}\label{Cor:pure_morphic_2LC}
Let $w=\phi^\omega(a)$ be a pure morphic word over $\mathcal{A}$ and let $\mathcal{B}$ be a sub-alphabet  of $\mathcal{A}$ such that $\phi:\mathcal{B}\to\mathcal{B}^*$ and $|\mathcal{B}|=2$. Furthermore, assume that $\phi^\omega(a)$ contains a letter $b\in\mathcal{B}$ such that $\phi$ is prolongable over $b$.
{Then $\tau(w)_p$ satisfies pLC for any coding $\tau:\mathcal{A}\to\{0,1,\ldots,p-1\}$}.
\end{corollary}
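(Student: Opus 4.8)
The plan is to reduce Corollary~\ref{Cor:pure_morphic_2LC} to the already-established Theorem~\ref{Theorem:pure_morphic_2LC} via the machinery of Corollary~\ref{Corollary:Morphic_subalphabets}. The key observation is that the hypotheses are tailored precisely so that $\phi$ restricts to a self-map of the two-letter sub-alphabet $\mathcal{B}$, on which $\phi$ is prolongable over $b$. First I would form the restricted morphism $\phi|_{\mathcal{B}}:\mathcal{B}\to\mathcal{B}^*$ and the pure morphic word $\phi^\omega(b)\in\mathcal{B}^\omega$ obtained by iterating $\phi$ on $b$; since $|\mathcal{B}|=2$, after identifying the two letters of $\mathcal{B}$ with $\{0,1\}$ this is a pure morphic word on a binary alphabet. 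The point of Theorem~\ref{Theorem:pure_morphic_2LC} is that \emph{every} such binary pure morphic word, read in base $2$, satisfies $2$LC, so $\phi^\omega(b)$, viewed through the binary identification, yields a real number satisfying $2$LC.

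Next I would transfer this conclusion from $2$LC back to pLC for an arbitrary prime $p$ and an arbitrary coding $\tau:\mathcal{A}\to\{0,1,\ldots,p-1\}$. The natural route is to apply Corollary~\ref{Corollary:Morphic_subalphabets}, which asserts that if $\tau(\phi^\omega(b))_p$ satisfies pLC then so does $\tau(\phi^\omega(a))_p=\tau(w)_p$. Thus the whole burden is to verify that $\tau(\phi^\omega(b))_p$ satisfies pLC. Here one uses that $\phi^\omega(b)$ lives on the binary sub-alphabet $\mathcal{B}$: the coding $\tau$ restricted to $\mathcal{B}$ sends the two letters to two digits $c_0,c_1\in\{0,1,\ldots,p-1\}$, so $\tau(\phi^\omega(b))$ is a word on the two-letter digit-alphabet $\{c_0,c_1\}$ whose underlying combinatorial structure is identical to the binary word $\phi^\omega(b)$.

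The main obstacle, and the step requiring genuine care, is bridging the gap between ``satisfies $2$LC'' (a statement about base-$2$ expansions and the prime $2$) and ``satisfies pLC for $\tau(\phi^\omega(b))_p$'' (a statement about base-$p$ expansions and the prime $p$). The cleanest way around this is to note that the arguments underlying Theorem~\ref{Theorem:pure_morphic_2LC}, namely Lemma~\ref{Lemma:BinaryPureMorphic} together with Theorem~\ref{Theorem:3_magic_no.}, are essentially combinatorial: they locate either an overlap $uXuXu$ with $\lim_{n\to\infty}|\phi^n(u)|=\infty$, or one of the other structural properties \textbf{(P1)-(P3)}, in any binary pure morphic word. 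Since these structural features of $\phi^\omega(b)$ depend only on the word combinatorics of the binary word and not on which two digits it is spelled with, and since Theorem~\ref{Theorem:3_magic_no.} and its corollaries hold for an arbitrary prime $p$ and an arbitrary non-erasing coding (as emphasised in the Remark following Theorem~\ref{Theorem:unif_rec_CE}), the same structural feature forces $\tau(\phi^\omega(b))_p$ to satisfy pLC for every $p$.

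I would therefore organise the proof as follows: first establish that $\phi^\omega(b)$ is a binary pure morphic word prolongable on $b$; then invoke Lemma~\ref{Lemma:BinaryPureMorphic} to obtain one of the properties \textbf{(P1)-(P3)} for this word, exactly as in the proof of Theorem~\ref{Theorem:pure_morphic_2LC}, but now feeding the resulting repetitive block into the $p$-adic versions Theorem~\ref{Theorem:3_magic_no.} and Proposition~\ref{Prop:LimitWord} rather than their $2$-adic specialisations, to conclude $\tau(\phi^\omega(b))_p$ satisfies pLC; and finally apply Corollary~\ref{Corollary:Morphic_subalphabets} to lift this to $\tau(w)_p$. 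The delicate point to check is that whichever of \textbf{(P1)-(P3)} is used produces a repetition hypothesis that survives the passage to general $p$, in particular that the non-erasing coding $\tau$ preserves the divergence of block lengths required by Theorem~\ref{Theorem:3_magic_no.}; this is where I expect the real content to lie, and it will hinge on the fact that a coding is length-preserving and hence cannot collapse the diverging factors produced by Lemma~\ref{Lemma:BinaryPureMorphic}.
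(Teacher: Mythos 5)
Your overall architecture matches the paper's: restrict $\phi$ to the two-letter sub-alphabet $\mathcal{B}$, identify $\phi^\omega(b)$ with a binary pure morphic word, run the trichotomy of Lemma~\ref{Lemma:BinaryPureMorphic}, and lift the conclusion back to $\tau(w)_p$ via Corollary~\ref{Corollary:Morphic_subalphabets}. Your treatment of cases \textbf{(P2)} and \textbf{(P3)} is also correct and is what the paper does: a coding preserves periodic blocks and overlaps, so Proposition~\ref{Prop:LimitWord} (using that a periodic base-$p$ expansion gives a rational, hence well-approximable, limit point) and Proposition~\ref{Proposition:prim_or_uniform} apply verbatim for every prime $p$ and every coding.

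The gap is in case \textbf{(P1)}. There the underlying binary word is the Thue--Morse word $M$ (or $\widetilde{M}$), which is overlap-free and aperiodic: it produces no repetitive block at all, so there is nothing to feed into Theorem~\ref{Theorem:3_magic_no.} or Proposition~\ref{Prop:LimitWord}, and your claim that ``the same structural feature forces $\tau(\phi^\omega(b))_p$ to satisfy pLC for every $p$'' by combinatorial means is exactly the step that fails --- the whole point of \textbf{(P1)} is that the combinatorial mechanism is unavailable. Even in the base-$2$ setting, this case is settled in the proof of Theorem~\ref{Theorem:pure_morphic_2LC} not by repetition arguments but by the Diophantine theorem of Badziahin and Zorin (Theorem~\ref{Theorem:TM_bad_approx}), and for the present corollary you need strictly more than that: you must show that the real number whose base-$p$ expansion is the Thue--Morse word written with two arbitrary digits $c_0,c_1\in\{0,1,\ldots,p-1\}$, denoted $TM(c_0,c_1)_p$ in the paper, satisfies pLC. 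The paper devotes a separate argument to this (Proposition~\ref{TM_general_bad_approx}): one checks that $TM(c_0,c_1)_n$ is a rational affine image of $M_n$ (in effect $TM(c_0,c_1)_n=c_0/(n-1)+(c_1-c_0)M_n$, carried out in the paper through a chain of digit multiplications, complementations, and additions of $\ell/(n-1)$), that well-approximability is preserved under such maps, and that Theorem~\ref{Theorem:TM_bad_approx} holds in every base $n$ not divisible by $15$ --- in particular in every prime base $p$. Without this arithmetic transfer step, and without invoking Badziahin--Zorin in base $p$ rather than base $2$, your proof does not close the \textbf{(P1)} case, which is where the real content of this corollary lies.
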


Finally, as \JB{a result contrasting with} Corollary~\ref{Cor:3_magic_no.}, we show that \JB{ the lengths} of the overlap-free subwords of the base-$2$ expansion of a counterexample to $2$LC are bounded.

\begin{theorem}\label{Theorem:bounded_overlap_free_2LC}
Assume that $x$ is a counterexample to $2$LC and let $w(x,2)$ be the corresponding base-$2$ expansion. Then the length of the overlap-free subwords in $w(x,2)$ are bounded.
\end{theorem}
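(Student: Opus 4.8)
The plan is to prove the contrapositive: if the base-$2$ expansion $w(x,2)$ contains overlap-free subwords of unbounded length, then $x=w(x,2)_2$ satisfies $2$LC. The first move is a compactness argument. Let $S$ be the set of finite overlap-free factors of $w(x,2)$. Since every factor (in particular every prefix) of an overlap-free word is again overlap-free, $S$ is closed under taking prefixes and hence forms the node set of a binary tree ordered by the prefix relation. The hypothesis that overlap-free subwords have unbounded length forces $S$ to contain words of every length, so this tree is infinite; by K\"onig's lemma it has an infinite branch, producing an infinite overlap-free word $v\in\{0,1\}^\omega$ each of whose prefixes lies in $S$, i.e. occurs as a subword of $w(x,2)$. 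Proposition~\ref{Prop:LimitWord} then reduces the problem to showing that \emph{every} infinite overlap-free binary word $v$ has $v_2$ satisfying $2$LC: granting this, since all prefixes of our $v$ occur in $w(x,2)$, the proposition immediately yields that $x$ satisfies $2$LC.

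Next I would record the elementary combinatorial lemma that every infinite overlap-free binary word contains $010$ or $101$ as a factor. Indeed, if it avoided both, then every interior maximal run would have length at least $2$ (a run of length $1$ in the interior produces $010$ or $101$), while overlap-freeness forbids the cubes $000$ and $111$ and so caps runs at length $2$; thus, after a possible single initial letter, the word would be $(0011)^\omega$ or $(1100)^\omega$, both of which contain the $9/4$-power $001100110$ (respectively $110011001$) and are therefore not overlap-free, a contradiction.

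The structural heart of the argument is the classical \emph{desubstitution theorem} for overlap-free binary words (Thue; see also Restivo--Salemi and Allouche--Shallit): every infinite overlap-free binary word $\mathbf a$ can be written as $\mathbf a=u\,\mu(\mathbf b)$ with $u$ a word of length at most $2$ and $\mathbf b$ again infinite overlap-free, where $\mu$ is the Thue--Morse morphism. Iterating $n$ times writes $v$ as a finite prefix (of length $\leq 2(2^n-1)$) followed by $\mu^n(v^{(n)})$ with $v^{(n)}$ infinite overlap-free. Applying the lemma, $v^{(n)}$ contains $010$ or $101$, so $v$ contains $\mu^n(010)$ or $\mu^n(101)$ as a factor at a finite offset. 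Using the easily checked identity $\mu^n(1)=\overline{\mu^n(0)}$ (complementation commutes with $\mu$), we get $\mu^n(010)=s\,\overline{s}\,s$ and $\mu^n(101)=\overline{s}\,s\,\overline{s}$ with $s=\mu^n(0)$ of length $2^n$. In either case $v$ contains a factor of the form $w_n\overline{w_n}w_n$ with $|w_n|=2^n$. Taking $\delta_n=\tfrac12$, the word $w_n\overline{w_n}w_n^{\delta_n}$ occurs in $v$ and $|w_n^{\delta_n}|=2^{\,n-1}\to\infty$, so Corollary~\ref{Cor:conjugate_pLC} applies and $v_2$ satisfies $2$LC. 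Combined with the reduction of the first paragraph, this shows $x$ satisfies $2$LC, completing the contrapositive.

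I expect the \textbf{main obstacle} to be the correct invocation and bookkeeping of the desubstitution theorem: one must ensure that repeated desubstitution keeps $v^{(n)}$ both infinite and overlap-free, and that the accumulated prefix stays finite so that $\mu^n(010)$ (or $\mu^n(101)$) genuinely occurs as a subword of $v$ for every $n$. Everything else---the K\"onig's lemma reduction, the $010$/$101$ lemma, and the identity $\mu^n(1)=\overline{\mu^n(0)}$---is elementary, so the proof hinges on citing the structure theory of overlap-free words in exactly the form needed to manufacture the conjugate-repetition factors that Corollary~\ref{Cor:conjugate_pLC} consumes.
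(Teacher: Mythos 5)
Your proof is correct, but it takes a genuinely different route from the paper's, and the difference is substantive. The paper never leaves the realm of finite words: it iterates its own Lemma~\ref{Lemma:finiteoverlapfree} and Lemma~\ref{Lemma:overlapfreeiff} on each overlap-free factor of length $K$, writing it as $u_1\mu(u_2)\cdots\mu^{k-1}(u_k)\mu^k(y_k)\mu^{k-1}(v_k)\cdots v_1$ and keeping just a single letter $a$ of $y_k$; since $\mu^k(a)$ is a prefix of $M$ or $\widetilde{M}$ of length at least $(K+4)/8$, Proposition~\ref{Prop:LimitWord} reduces the theorem to the Badziahin--Zorin theorem (Theorem~\ref{Theorem:TM_bad_approx}) on the well-approximability of the Thue--Morse constant. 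You instead pass to an infinite overlap-free word $v$ via K\"onig's lemma, invoke the infinite desubstitution theorem (classical, but not stated in the paper, which only records the finite version), and use your $010$/$101$ lemma to manufacture factors $s\,\overline{s}\,s$ with $s=\mu^n(0)$, which you feed into Corollary~\ref{Cor:conjugate_pLC}. The trade-off: the paper's argument is self-contained in its word combinatorics but leans on the deep Diophantine input of Badziahin and Zorin, whereas your argument eliminates that dependence entirely --- the only Diophantine input is the paper's own elementary Corollary~\ref{Cor:conjugate_pLC} --- at the price of importing the infinite factorization theorem and some compactness machinery. In fact the two approaches can be merged to get the best of both: every \emph{finite} overlap-free binary word of length at least $9$ contains $010$ or $101$ (your run argument made finite, noting that $001100110$, $011001100$, $110011001$, $100110011$ are all overlaps), so in the paper's finite iteration one may extract a length-$3$ factor $010$ or $101$ of $y_k$ instead of a single letter, obtaining $\mu^k(010)=s\overline{s}s$ or $\mu^k(101)=\overline{s}s\overline{s}$ inside $w(x,2)$ directly; Corollary~\ref{Cor:conjugate_pLC} then finishes the proof with no K\"onig's lemma, no infinite desubstitution theorem, and no appeal to Theorem~\ref{Theorem:TM_bad_approx}.
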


The proof of Theorem~\ref{Theorem:pure_morphic_2LC} and Corollary~\ref{Cor:pure_morphic_2LC} can be found in Section~\ref{Proof:pure_morphic_2LC} and the proof of Theorem~\ref{Theorem:bounded_overlap_free_2LC} can be found in Section~\ref{Proof:bounded_overlap_free_2LC}.

\section{The $p$-adic Littlewood Conjecture}\label{pLC_proofs}

\subsection{Proof of Theorems~\ref{Theorem:3_magic_no.} and~\ref{Theorem:conjugate_pLC}}\label{Proof:increasing_factors}

To prove Theorems~\ref{Theorem:3_magic_no.} and~\ref{Theorem:conjugate_pLC},  we will show that the conditions of these theorems imply (\ref{prop:criterion}).
To this end, we will produce sequences $(q_j)_{j=1}^\infty$ and $(k_j)_{j=1}^\infty$ of natural numbers such that
\begin{equation}\label{limprop:criterion}
\lim_{j \rightarrow \infty} q_j\cdot \Vert  q_j p^{k_j} x\Vert = 0.
\end{equation}

\begin{proof}[Proof of Theorem~\ref{Theorem:3_magic_no.}]
%
For each $j \in \n$, let $k_j$ be the length of the prefix of $(a_n)_{n=1}^\infty$ up to the first occurrence of the subword $w_j w_j w_j^{\delta{_j}}$. Set 
\[
x' := \left\{p^{k_j} x\right\} = \left\{p^{k_j} \sum_{n=1}^\infty a_n p^{-n}\right\} = \sum_{n=1}^\infty a_{k_j + n} p^{-n}.
\]
Then, the base-$p$ expansion of $x'$ begins with the subword $w_j w_j w_j^{\delta{_j}}$.

Now, for each $j$, we denote $w_j$ as $ b^{(j)}_1 b_2^{(j)} \cdots b_{m_j}^{(j)}$ {where $m_j=\length{w_j}$,} and define a sequence of rational numbers 
\[
\frac{r_j}{q_j} := \sum_{h=0}^\infty \sum_{i=1}^{m_j} \frac{b_i^{(j)}}{p^{i+hm_j}} = \sum_{h=0}^\infty \frac{1}{p^{hm_j}} \sum_{i=1}^{m_j} \frac{b_i^{(j)}}{p^i}.
\]
These are the rational numbers (in reduced form) whose base-$p$ expansion is obtained by extending the word $w_j$ periodically. The sequence of denominators $(q_j)_{j=1}^\infty$ is the sequence used in (\ref{limprop:criterion}).

The numbers $r_j/q_j$ approximate $x'$ rather well. Indeed,
\[
\left\vert x' - \frac{r_j}{q_j}\right\vert = {\left\vert \sum_{i=\lfloor\delta{_j} m_j \rfloor+ 1 }^{m_j} \frac{c_{2,i}^{(j)}}{p^{i+2m_j}} +   \sum_{h=3}^\infty \frac{1}{p^{hm_j}} \sum_{i=1}^{m_j} \frac{c_{h,i}^{(j)}}{p^i} \right\vert} < \frac{1}{p^{2m_j+\lfloor\delta{_j} m_j \rfloor}},
\]

{where $c_{h,i}^{(j)}=(a_{k_j+hm_j+i}-b_i^{(j)})$}.
On the other hand,
\[
\frac{r_j}{q_j} = {\left(\sum_{h=0}^\infty \frac{1}{p^{hm_j}}\right)} {\left(\sum_{i=1}^{m_j} \frac{b_i^{(j)}}{p^i}\right)} = \frac{p^{m_j}}{p^{m_j}-1} \sum_{i=1}^{m_j}  \frac{b_i^{(j)}}{p^i} = \frac{r'_j}{p^{m_j}-1},
\]
where $r'_j \in \z$. Consequently, $q_j \le p^{m_j} - 1 < p^{m_j}$ and therefore,
\[
q_j \cdot \Vert  q_j p^{k_j}x\Vert \le q_j^{2} \cdot \left\vert x' - \frac{r_j}{q_j}\right\vert < \frac{1}{p^{\lfloor\delta{_j} m_j \rfloor}}.
\]
{Since $\delta_j\cdot m_j$} tends to infinity with $j${, the theorem follows}.
\end{proof}

The above proof illustrates a very useful technique for using combinatorial properties of base-$p$ expansions to show that real numbers satisfy pLC. The proof of Theorem~\ref{Theorem:conjugate_pLC} {serves as generalisation of the above method}.

{\begin{proof}[Proof of Theorem~\ref{Theorem:conjugate_pLC}]
For each $j\in\mathbb{N}$, let $k_j$ be the length of the prefix of $(a_n)^\infty_{n=1}$ up to the first occurrence of the subword $w_jw_j^{\delta_j}$ and set
$$x':=\{p^{k_j}x\}$$
Then, the base-$p$ expansion of $x'$ begins with the subword $w_jw_j^{\delta_j}$. Let $n_j=\lfloor{\delta_j}\rfloor$.

For each $j\in\mathbb{N}$, we denote $w_j$ as $ b^{(j)}_1 b_2^{(j)} \cdots b_{m_j}^{(j)}$ and set
\begin{equation}\label{eq:rational_approx2}
 \frac{r_j}{q_j}:=\sum_{h=0}^\infty\sum_{i=1}^{m_j}\frac{b_i^{(j)}}{p^{i+hm_j}}={\left(\sum_{h=0}^\infty\frac{1}{p^{(h+1)m_j}}\right)\left(\sum^{m_j}_{i=1}p^{{m_j}-i}b_i^{(j)}\right)}
\end{equation} 
These are the rational numbers (in reduced form) whose base-$p$ expansion is obtained by extending the word $w_j$ periodically.
 
As in the proof of Theorem~\ref{Theorem:3_magic_no.}, this sequence of rational numbers $\frac{r_j}{q_j}$ approximates $x'$ very well,

\[
\left\vert x' - \frac{r_j}{q_j}\right\vert= {\left\vert \sum_{i=\lfloor (\delta_j-n_j)m_j \rfloor+ 1 }^{m_j} \frac{c_{n_j,i}^{(j)}}{p^{i+n_jm_j}} +   \sum_{h=n_j+1}^\infty \frac{1}{p^{hm_j}} \sum_{i=1}^{m_j} \frac{c_{h,i}^{(j)}}{p^i} \right\vert} < \frac{1}{p^{m_j+\lfloor{\delta_jm_j\rfloor}}},
\]
{where $c_{h,i}^{(j)}=(a_{k_j+2hm_j+i}-b_i^{(j)})$}.

Let $d_j=\gcd(p^{m_j}-1,(w_j)_p^+)$. Then there exists some $a\in\mathbb{N}$ such that
$$ ad_j=\sum^{m_j}_{i=1}p^{{m_j}-i}b_i^{(j)}
$$


Combining this with \eqref{eq:rational_approx2}  shows 
\begin{align*}
\frac{r_j}{q_j}&=a\cdot\frac{d_j}{p^{m_j}-1}.
\end{align*}

From $\eqref{ineq:divisor}$, it follows that $q_j\leq{(p^{m_j}-1)}/{d_j}\leq p^{\ell_j}$ and therefore
$$q_j^2\cdot\left|x'-\frac{r_j}{q_j}\right|< \frac{p^{2\ell_j}}{p^{m_j+\lfloor{\delta_j m_j\rfloor}}}=\frac{1}{p^{m_j+\lfloor{\delta_j m_j\rfloor}-2\ell_j}}.$$


%
%
%
Since it was assumed that $\lim_{j\to\infty} m_j+\lfloor \delta_j m_j\rfloor -2\ell_j=\infty$, this completes the proof. 
\end{proof}}

{\subsection{Proof of Proposition~\ref{Prop:LimitWord} and Theorem~\ref{Theorem:unif_rec_CE}}\label{Proof:Subwords}
%

Given an infinite word $w\in\mathcal{A}^\omega$, we define the \textit{set of suffixes} $\mathcal{S}(w)$ of $w\in\mathcal{A}^\omega$ to be
 $$\mathcal{S}(w):=\{T^k(w): k\in\mathbb{Z}_{\geq{0}}\}.$$ We can turn $\mathcal{A}^\omega$ into a metric space by defining a metric $d(x,y)=2^{-|u|}$, where $u$ is the largest common prefix of $x$ and $y$ and $d(x,x)=0$. From this, we can take the topological closure of the set of suffixes $\overline{\mathcal{S}(w)}$.  A word $v\in\mathcal{A}^\omega$ is an element of $\overline{\mathcal{S}(w)}$ if and only if every prefix of $v$ appears in $w$. Analogously, for any $x\in[0,1]$, we can define the set $$T_p(x):=\{\{p^n x\}  : n\in\n\cup\{0\}\}.$$ Assuming $x$ is not a rational number with denominator equal to $p^k$ for natural number $k\geq{1}$, the sets $T_p(x)$ and $\mathcal{S}(w(x,p))$ are in bijection, where each real number corresponds to its base-p expansion. Likewise, the topological closures $\overline{\mathcal{T}_p(x)}$ (using the Euclidean metric) and $\overline{\mathcal{S}(w(x,p))}$  are also in bijection. This comes from the observation that there is a subsequence $\{p^{k_j}x\}$ that limits to $y$ if and only if the base-$p$ expansions of $\{p^{k_j}x\}$ limit to the base-$p$ expansion of $y$. Using the notions above, the proof of Proposition~\ref{Prop:LimitWord} essentially comes down to showing that if any accumulation point of $T_p(x)$  satisfies pLC, then $x$ satisfies pLC. The contrapositive of this statement is shown in the next lemma.

\begin{lemma}\label{T1}
Let $x$ be a counterexample to pLC and assume that there exists some $\ve>0$ such that $m_p(x)\geq{\ve}.$ Then $m_p(y)\geq\ve$ for all $y\in\overline{T_p(x)}$ .
\end{lemma}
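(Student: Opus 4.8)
The plan is to establish the lemma in two steps: first show that the lower bound holds for every point of the orbit $T_p(x)$ itself, then propagate it to the closure by a continuity argument. Throughout I take $m_p(z):=\inf_{k\geq 0}\inf_{q\geq 1} q\cdot\Vert q p^k z\Vert$, so that the hypothesis reads $m_p(x)\geq\ve$ and the criterion (\ref{prop:criterion}) identifies counterexamples to pLC as those $z$ with $m_p(z)>0$.

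First I would observe that $m_p$ is non-decreasing along the orbit. For any $n\geq 0$, since $\{p^n x\}$ differs from $p^n x$ by an integer and $q p^k$ times an integer is again an integer, we have $\Vert q p^k \{p^n x\}\Vert = \Vert q p^{k+n} x\Vert$ for all $k,q$. Consequently $m_p(\{p^n x\})=\inf_{k\geq 0,\,q\geq 1} q\cdot\Vert q p^{k+n}x\Vert$ is an infimum taken over the sub-collection of index pairs whose exponent is at least $n$, and hence $m_p(\{p^n x\})\geq m_p(x)\geq\ve$. Thus every element of $T_p(x)$ satisfies the claimed bound.

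The second step is to pass to the closure. Suppose, for contradiction, that some $y\in\overline{T_p(x)}$ has $m_p(y)<\ve$. By definition of the infimum there are \emph{fixed} integers $k\geq 0$ and $q\geq 1$ with $q\cdot\Vert q p^k y\Vert<\ve$. The key point is that for these fixed $k,q$ the map $z\mapsto q\cdot\Vert q p^k z\Vert$ is continuous on $[0,1]$ (the distance-to-nearest-integer function is continuous, and composing with the affine map $z\mapsto q p^k z$ and scaling by $q$ preserves continuity). Hence the strict inequality persists on an open neighbourhood $U$ of $y$. Since $y$ lies in the closure of $T_p(x)$, the neighbourhood $U$ meets $T_p(x)$, so it contains some point $\{p^n x\}$, and for this point $q\cdot\Vert q p^k\{p^n x\}\Vert<\ve$, whence $m_p(\{p^n x\})<\ve$. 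This contradicts the first step, completing the argument.

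The proof is essentially routine once the definition of $m_p$ and its orbit-monotonicity are in hand. I expect the only point requiring care to be the continuity step: one must fix the witnessing pair $(k,q)$ realising $m_p(y)<\ve$ \emph{before} invoking continuity, and argue with the continuous evaluation $z\mapsto q\cdot\Vert q p^k z\Vert$ rather than with $m_p$ itself, since $m_p$ is only lower semicontinuous as an infimum of continuous functions and need not be continuous. It is precisely this reduction to a single continuous evaluation that lets us transport the strict bound from $y$ back to a genuine orbit point and derive the contradiction.
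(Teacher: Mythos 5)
Your argument is internally correct, but it proves the lemma for a different quantity than the one the paper's own proof uses, and the difference is exactly where the work lies. You define $m_p(z)=\inf_{k\geq 0}\inf_{q\geq 1}q\cdot\Vert qp^kz\Vert$, the double infimum of \eqref{prop:criterion}. The paper's proof instead takes $m_p(z)=\liminf_{q\to\infty}q\cdot\vert q\vert_p\cdot\Vert qz\Vert$, the quantity appearing in \eqref{eq:pLC} (this is visible from the identity $\liminf_{q\to\infty}q\,\vert q\vert_p\,\Vert qp^nz\Vert=\liminf_{q\to\infty}p^nq\,\vert p^nq\vert_p\,\Vert p^nqz\Vert$ used in its first step). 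Under your reading, both of your steps are sound: the orbit-monotonicity is the same index-shift observation the paper makes, and your closure step --- fix one witnessing pair $(k,q)$, use continuity of the single evaluation $z\mapsto q\Vert qp^kz\Vert$, and pull the strict inequality back to an orbit point --- is exactly the proof that an infimum of continuous functions is \emph{upper} semicontinuous. (In your closing remark you call this lower semicontinuity; that is backwards, and upper semicontinuity is precisely the property that transports lower bounds from the orbit to its closure.)

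The caveat is that under the paper's liminf reading your closure step does not suffice as written: there, $m_p(y)<\ve$ only produces small values $q\vert q\vert_p\Vert qy\Vert<\ve$ along a sequence $q\to\infty$, and conversely a single small evaluation at one fixed $q$ for an orbit point contradicts nothing, since a liminf is insensitive to finitely many terms. The paper therefore runs a quantitative, sequential version of your continuity idea: it sets $\ve'=\lim_n q_n\vert q_n\vert_p\Vert q_ny\Vert<\ve$ for witnesses $q_n\to\infty$, picks orbit points $\{p^{k_n}x\}$ within $\delta_n=(\ve-\ve')/(2q_n^2)$ of $y$, and concludes that along $p^{k_n}q_n\to\infty$ the quantity for $x$ stays at most $\ve'+(\ve-\ve')/2<\ve$, contradicting $m_p(x)\geq\ve$; in effect your fixed neighbourhood $U$ is replaced by a neighbourhood that shrinks with $q_n$. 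The discrepancy is harmless for the application: positivity of \eqref{eq:pLC} and of \eqref{prop:criterion} are equivalent by Lemma~\ref{criterion} and the displayed inequalities following it, so your version of Lemma~\ref{T1} supports Theorem~\ref{Theorem:unif_rec_CE} equally well. But if $m_p$ is to mean what the authors actually manipulate, your continuity step must be upgraded to the sequential argument above.
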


\begin{proof}
We trivially have that $m_p(x)\geq{\ve}$ implies that $m_p(\{p^nx\})\geq{\ve}$ for every $n\in\n\cup\{0\}$, since $$\liminf_{q\to\infty} \mpx{q}{p^nx}=\liminf_{q\to\infty}\mpx{p^nq}{x}\geq \liminf_{q\to\infty} \mpx{q}{x}.$$

Assume that $y$ is a limit point of $T_p(x)$ such that $m_p(y)<\ve$. Then there exists some {$\ve'\in\mathbb{R}$ satisfying $0<\ve'<\ve<1$} and some sequence $(q_n)_{n=1}^\infty$ such that 
$$\lim\limits_{n\to\infty}\mpx{q_n}{y}=\ve'.$$ 
For every $n\in\n$, let $\delta_n=2^{-1}q_n^{-2}(\ve-\ve')$ and let $k_n$ be the smallest natural number such that $\{p^{k_n}x\}=y+\Delta_n$ with $|\Delta_n|<\delta_n$. The existence of $k_n$ follows from the fact that $y$ is an accumulation point.  This implies 
\begin{align*}
    \mpx{p^{k_n}q_n}{x}&=\mpx{q_n}{p^{k_n}x}\\
     &=\mpx{q_n}{(y+\Delta_n)}\\
    &\leq \mpx{q_n}{y}+\mpx{q_n}{\Delta_n} \\
    &\leq \mpx{q_n}{y}+q_n\cdot|q_n\Delta_n| 
\end{align*}
Then 
$$
    \lim\limits_{n\to\infty}\mpx{p^{k_n}q_n}{x} \leq{\ve'+2^{-1}(\ve-\ve')}<\ve
$$

Therefore, $m_p(x)<{\ve}$ which is a contradiction.
\end{proof}

 We can use the above lemma to deduce that should a counterexample $x$ to pLC exist, there exists an element $y$ of  $\overline{T_p(x)}$ with a uniformly recurrent base-$p$ expansion that is a counterexample to pLC. This proves Theorem~\ref{Theorem:unif_rec_CE}.

\begin{proposition}
Let $x$ be a counterexample of pLC. Then $\overline{T_p(x)}$ contains a counterexample of pLC with a uniformly recurrent base-$p$ expansion.
\end{proposition}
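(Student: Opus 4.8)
The plan is to combine Lemma~\ref{T1} with a standard compactness-and-minimality argument for subshifts. Let $x$ be a counterexample to pLC, so that $m_p(x)=\liminf_{q\to\infty}\mpx{q}{x}>0$; fix $\ve>0$ with $m_p(x)\geq\ve$. The set $\overline{T_p(x)}$ is a closed subset of the compact metric space $[0,1]$, hence compact, and it is invariant under multiplication by $p$ modulo $1$ (equivalently, under the shift $T$ on base-$p$ expansions). By Lemma~\ref{T1}, every $y\in\overline{T_p(x)}$ satisfies $m_p(y)\geq\ve$, so \emph{every} point of $\overline{T_p(x)}$ is already a counterexample to pLC. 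The only remaining task is therefore to locate a point of $\overline{T_p(x)}$ whose base-$p$ expansion is uniformly recurrent.

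The key step is to pass to a minimal subsystem. Working on the symbolic side, let $\Omega=\overline{\mathcal{S}(w(x,p))}\subseteq\mathcal{A}^\omega$ be the shift-orbit closure of the base-$p$ expansion of $x$, which is in bijection with $\overline{T_p(x)}$ as explained in the excerpt. This is a nonempty compact space that is closed under the shift map $T$. First I would invoke Zorn's lemma on the collection of nonempty closed $T$-invariant subsets of $\Omega$, ordered by reverse inclusion: any chain has nonempty intersection by the finite intersection property (each member is a nonempty closed subset of a compact space, and nested closed sets have nonempty total intersection), so a minimal nonempty closed $T$-invariant subset $\Omega_0\subseteq\Omega$ exists. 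Then I would pick any $v\in\Omega_0$ and argue that $v$ is uniformly recurrent: this is the classical equivalence that a point of a subshift lies in a minimal subsystem if and only if its orbit closure equals that subsystem, which forces uniform recurrence. Concretely, given any finite subword $u$ of $v$, minimality implies that the (nonempty, closed, shift-invariant) set of points of $\Omega_0$ whose expansion begins with a later occurrence of $u$ must generate all of $\Omega_0$, and a compactness argument on the open cover of $\Omega_0$ by cylinder sets where $u$ appears within a bounded window yields a uniform gap bound $N_u$.

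Finally, I would transfer back: the point $v\in\Omega_0\subseteq\Omega$ corresponds under the bijection to some $y\in\overline{T_p(x)}$, and by Lemma~\ref{T1} this $y$ satisfies $m_p(y)\geq\ve>0$, so $y$ is a counterexample to pLC with a uniformly recurrent base-$p$ expansion, which is exactly the assertion of the proposition (and hence Theorem~\ref{Theorem:unif_rec_CE}). The main obstacle I anticipate is the careful statement and verification of the equivalence between membership in a minimal subshift and uniform recurrence; the existence of the minimal set via Zorn's lemma is routine, but spelling out the compactness argument that converts minimality into the explicit uniform bound $N_u$ requires some care. An alternative to Zorn's lemma, which may read more cleanly, is to note that every subshift contains a uniformly recurrent point directly via the Birkhoff recurrence theorem applied to the dynamical system $(\overline{T_p(x)},\,y\mapsto\{py\})$; either route reduces the proposition to the already-established Lemma~\ref{T1} together with standard topological dynamics.
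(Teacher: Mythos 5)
Your proposal is correct and takes essentially the same route as the paper's proof: both pass to the compact, shift-invariant orbit closure $\overline{\mathcal{S}(w(x,p))}$, extract a minimal nonempty closed invariant subset, use the classical fact that points of a minimal subshift are uniformly recurrent, and invoke Lemma~\ref{T1} to conclude that such a point remains a counterexample to pLC. The only difference is presentational: you spell out the Zorn's-lemma existence of the minimal subsystem and sketch the minimality-implies-uniform-recurrence argument, whereas the paper delegates both to the citation \cite[Theorem 1.5.9]{Lothaire}.
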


\begin{proof}
By construction $\overline{T_p(x)}$ is closed and bounded. Therefore, $\overline{T_p(x)}$ is compact and invariant under multiplication by $p$. 
The corresponding set of base-$p$ expansions is given by $\overline{S(w(x,p))}$ and is also compact and invariant under the shift map $T$.
At least one minimal, invariant, compact subset $R$ of $\overline{S(w(x,p))}$ exists, and by \cite[Theorem 1.5.9]{Lothaire}, this is a set  comprised of numbers with uniformly recurrent base-$p$ expansions. By Lemma~\ref{T1}, all elements in $R$ are counterexamples to pLC.
\end{proof} }

\section{The $2$-adic Littlewood Conjecture}\label{2LC_Proofs}

\subsection{Proof of Theorem~\ref{Theorem:pure_morphic_2LC}}\label{Proof:pure_morphic_2LC}

In order to prove Theorem~\ref{Theorem:pure_morphic_2LC}, it will be useful to  first introduce a number of auxiliary results. The first result is that of Se\'ebold \cite{Seebold:1985}, which shows that the only pure morphic words over $\{0,1\}$ which are overlap-free are the Thue-Morse word $M$ and its complement $\widetilde{M}$.

\begin{theorem}[\cite{Seebold:1985}]\label{Theorem:Seebold}
$M$ and $\widetilde{M}$ are the only pure morphic overlap-free words in $\{0,1\}^\omega$. 
\end{theorem}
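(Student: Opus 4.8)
The plan is to establish Theorem~\ref{Theorem:Seebold}, which characterises the pure morphic overlap-free words over a binary alphabet as precisely the Thue--Morse word $M$ and its complement $\widetilde{M}$. Since this is a cited result of Se\'ebold, I would reconstruct the argument rather than invoke it as a black box. The forward direction is immediate: $M = \mu^\omega(0)$ and $\widetilde{M} = \mu^\omega(1)$ are pure morphic by definition, and the Thue--Morse word is classically overlap-free (equivalently, it contains no subword of the form $uXuXu$ with $u$ a single letter), with the same holding for its complement by symmetry of the relabelling $0 \leftrightarrow 1$. So the content is entirely in the reverse direction.

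For the reverse direction, suppose $w = \phi^\omega(a)$ is a pure morphic overlap-free word on $\{0,1\}$, where $\phi$ is prolongable on $a$. First I would reduce to the case where $\phi$ is non-erasing and expanding. Overlap-freeness forbids $00$, $11$, $000$ appearing, and more to the point it forbids any square $uu$ of a single letter repeated thrice; combined with the fixed-point structure $\phi(a)=a x$, this sharply constrains the possible images $\phi(0)$ and $\phi(1)$. The key classical step is to show that an overlap-free fixed point forces $\phi$ to be (a power of) the Thue--Morse morphism $\mu$ up to exchanging the roles of $0$ and $1$: one argues that $\phi(0)$ and $\phi(1)$ must begin with different letters and have controlled lengths, ruling out all short candidates except $\{01,10\}$ by directly exhibiting an overlap in $\phi^2(a)$ for every other choice. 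This is essentially a finite case analysis once the length and prefix constraints are in place.

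The main obstacle I anticipate is controlling the lengths of $\phi(0)$ and $\phi(1)$: a priori the morphism generating an overlap-free word could be long and exotic, so one needs a structural lemma showing that overlap-freeness of the infinite fixed point propagates back to a rigid constraint on the generating morphism. The standard route here is to use the fact, due to Thue, that $M$ is (up to complementation) the unique overlap-free word that is a fixed point of a morphism, by appealing to the property that every overlap-free word over $\{0,1\}$ has a unique factorisation via $\mu$ into blocks $01$ and $10$ (the $\mu$-factorisation), which lets one ``desubstitute'' and show any overlap-free fixed point must already be invariant under $\mu$ itself. I would carry out the desubstitution argument: given the overlap-free fixed point, recover the preimage under $\mu$, show it is again an overlap-free fixed point of a related morphism, and iterate to pin down $\phi$ as a power of $\mu$.

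Once $\phi$ is identified as a power of $\mu$ (possibly composed with the complementation), the fixed point $\phi^\omega(a)$ is necessarily $M$ or $\widetilde{M}$ according to whether $a = 0$ or $a = 1$, completing the proof. Given that the statement is quoted from \cite{Seebold:1985}, in the final write-up I would most likely state that the characterisation follows from Se\'ebold's theorem and only sketch the desubstitution heuristic, since the delicate finite case analysis and the uniqueness of the $\mu$-factorisation are exactly the technical heart that the citation is meant to supply.
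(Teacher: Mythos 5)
The paper offers no proof of this statement: Theorem~\ref{Theorem:Seebold} is quoted directly from \cite{Seebold:1985} and used as a black box in the proof of Theorem~\ref{Theorem:pure_morphic_2LC}. Your bottom line --- defer the technical heart to S\'e\'ebold --- therefore coincides with what the paper does, and the route you sketch for the hard direction (desubstitution through $\mu$) is the standard one; the paper itself relies on the finite analogue of exactly that desubstitution principle, Lemma~\ref{Lemma:finiteoverlapfree}, in its proof of Theorem~\ref{Theorem:bounded_overlap_free_2LC}.

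However, two factual claims in your sketch are false, and the case analysis you propose to build on them would collapse if carried out literally. First, overlap-freeness does \emph{not} forbid the factors $00$ and $11$: the Thue--Morse word contains both ($M=0110100110010110\cdots$). What is forbidden are $000$, $111$ and, more generally, factors of the form $uXuXu$. Indeed, if $00$ and $11$ were both excluded, the word would have to be $(01)^\omega$ or $(10)^\omega$, each of which contains an overlap ($01010$, respectively $10101$), so no infinite overlap-free word would exist at all. Second, overlap-free words do not admit a ``factorisation into blocks $01$ and $10$'' (again, $00$ is a factor of $M$); the correct desubstitution statement is the one recorded as Lemma~\ref{Lemma:finiteoverlapfree}: $x=u\mu(y)v$ with $|u|,|v|\leq 2$, where $y$ is again overlap-free by Lemma~\ref{Lemma:overlapfreeiff}, and this decomposition is essentially unique. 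Finally, when you pin $\phi$ down as a power of $\mu$ ``possibly composed with the complementation'', note that it is prolongability that excludes the composed case: writing $E$ for the exchange $0\leftrightarrow 1$, the morphism $E\circ\mu^k$ maps each letter to a word beginning with the opposite letter, so it is prolongable on neither letter and has no one-sided fixed point; only after this exclusion does $\phi=\mu^k$ force $\phi^\omega(a)\in\{M,\widetilde{M}\}$. Since these are precisely the delicate points your write-up would delegate to the citation, the deferral is sound, but the heuristics as stated should not be left in.
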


Using this theorem, we can give the following characterisation of all binary pure morphic words.

\begin{lemma}\label{Lemma:BinaryPureMorphic}
Let $w$ be a pure morphic word in $\{0,1\}^\omega$, where $\phi$ is the underlying morphism. Then \JB{(at least) one of the following statements holds}:
\begin{enumerate}[label=\em{\textbf{(P\arabic*)}}]
\item $w$ is $M$ or $\widetilde{M}$.

\item There is a non-trivial subword $v$ of $w$, such that $v^n$ is a subword of $w$ for all $n\in\mathbb{N}$.
\item {$w$ contains overlap of the form $aXaXa$ \JB{where $a\in\mathcal{A}$ and $X\in\mathcal{A}^*$} and $\lim_{n\to\infty}|\phi\JB{^n}(a)|=\infty$.}
\end{enumerate}

\end{lemma}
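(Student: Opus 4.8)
The strategy is a case analysis on the structure of the underlying morphism $\phi:\{0,1\}\to\{0,1\}^*$, driven by Se\'ebold's theorem (Theorem~\ref{Theorem:Seebold}). First I would dispose of the degenerate cases. If $w$ is overlap-free, then by Theorem~\ref{Theorem:Seebold} we immediately land in case \textbf{(P1)}, since $M$ and $\widetilde M$ are the only pure morphic overlap-free words in $\{0,1\}^\omega$. So we may assume from now on that $w$ is \emph{not} overlap-free, i.e.\ $w$ contains some subword of the form $aXaXa$ with $a\in\{0,1\}$ and $X\in\{0,1\}^*$; the goal is to show that in this situation either \textbf{(P2)} or \textbf{(P3)} must hold.

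Since $w=\phi^\omega(a_0)$ is pure morphic, $\phi$ is prolongable on some letter $a_0$, so $\phi(a_0)=a_0 x$ with $x\notin M_\phi^*$. The key dichotomy I would draw is on the asymptotic growth of the images $\length{\phi^n(c)}$ for the two letters $c\in\{0,1\}$. Because the alphabet has only two letters, I would separate according to whether $\phi$ is \emph{growing} (both letters have $\length{\phi^n(c)}\to\infty$) or whether at least one letter is \emph{bounded} (its iterated images stay of bounded length, which for a binary alphabet forces that letter to be either mortal or ``quasi-periodic'' in the sense that $\phi^n(c)$ is eventually a fixed finite word or a periodic repetition). The plan is:

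\begin{enumerate}[label=(\roman*)]
\item \emph{Bounded-letter case.} Suppose some letter $c$ satisfies $\length{\phi^n(c)}\not\to\infty$. Since the alphabet is binary, the orbit $\{\phi^n(c):n\ge 0\}$ takes only finitely many distinct values, so some finite word $v$ satisfies $\phi^{n}(c)=\phi^{n+k}(c')$-type repetition, and one extracts a fixed finite word $v$ with $v^n$ appearing in $w$ for every $n$, giving \textbf{(P2)}. Concretely, a non-growing letter produces an eventually periodic image, whose periodic part furnishes the required $v$.
\item \emph{Growing case.} If instead $\length{\phi^n(c)}\to\infty$ for both letters, I would combine this with the assumed overlap $aXaXa$. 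The letter $a$ occurring in the overlap is then growing, so $\lim_{n\to\infty}\length{\phi^n(a)}=\infty$, and the overlap $aXaXa$ present in $w$ directly realizes case \textbf{(P3)}.
\end{enumerate}

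\textbf{Main obstacle.} The delicate point is the bounded-letter case, step (i): turning ``$\length{\phi^n(c)}$ is bounded'' into a genuine arbitrarily-long power $v^n$ appearing in $w$. One must be careful because a bounded letter could be mortal (contributing nothing) or could map to a word involving the other, growing letter. The argument I would use is that on a two-letter alphabet, if a letter is not growing then $\phi$ restricted to the sub-monoid it generates is length-bounded, so by the pigeonhole principle the sequence $(\phi^n(c))_n$ is eventually periodic as a sequence of words; extracting the repeated word and using prolongability (so that $c$, or a word containing $c$ together with its periodic continuation, actually occurs infinitely often as a factor of the fixed point) yields the factor $v$ with all powers $v^n$ present. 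Making this extraction precise -- in particular ensuring $v$ is non-trivial (nonempty and not a single letter whose powers are already covered) and that the occurrences of $v^n$ genuinely sit inside $\phi^\omega(a_0)$ rather than merely inside some $\phi^n(c)$ -- is where the real care is required, and I would expect the authors to invoke a standard fact about bounded letters of binary morphisms (or a direct pigeonhole argument on the finitely many images) to close this gap.
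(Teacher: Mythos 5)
Your opening move (overlap-free case $\Rightarrow$ \textbf{(P1)} via S\'e\'ebold) matches the paper, and your growing case (ii) is sound: if both letters grow, any overlap $aXaXa$ immediately witnesses \textbf{(P3)}. The genuine gap is your bounded-letter case (i): it is simply false that a non-growing letter forces \textbf{(P2)}, and there is no standard fact that will close this. Concretely, take $\phi(0)=0100$, $\phi(1)=1$, and $w=\phi^\omega(0)$. The letter $1$ is bounded ($\phi^n(1)=1$ for all $n$), so your extraction yields the ``periodic part'' $v=1$; but $11$ is not even a factor of $w$ (every $1$ in $w$ is isolated). Worse, \textbf{(P2)} itself fails for this $w$: runs of $0$ have length at most $3$, runs of $1$ have length $1$, and from the recursion $\phi^{n+1}(0)=\phi^n(0)\,1\,\phi^n(0)\,\phi^n(0)$ one can check that no fixed nonempty word occurs with unbounded powers (the repetition exponent of $w$ is bounded). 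What rescues the lemma here is \textbf{(P3)}: $w$ contains the overlap $000$, and the letter $0$ grows because it is the prolongable letter. So your dichotomy ``bounded letter $\Rightarrow$ \textbf{(P2)}, growing $\Rightarrow$ \textbf{(P3)}'' does not hold; in the bounded case the correct conclusion alternates between \textbf{(P2)} and \textbf{(P3)} depending on finer structure, and an overlap in the \emph{growing} letter must be constructed by hand.

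This is exactly what the paper's proof does where your argument breaks down. After normalising $\phi(0)=0u$, it splits on $\phi(1)$: if $\phi(1)=\epsilon$, the fixed point is literally $(\phi(0))^\omega$, giving \textbf{(P2)} --- note that your mechanism also fails here, since the bounded letter's orbit has empty periodic part, and the periodicity instead comes from $\phi^{k+1}(0)=(\phi(0))^{i_k}$. If $\phi(1)=1^n$ with $n\geq 2$, or $\phi(1)$ contains a $0$, one is in your case (ii) territory and the paper concludes \textbf{(P2)} or \textbf{(P3)} accordingly. The delicate case is $\phi(1)=1$ with $u$ containing a $0$: if $u$ ends in $1$, say $u=u'01^k$, then $\phi^n(u)$ ends in $1^{(n+1)k}$, giving \textbf{(P2)}; but if $u$ ends in $0$, say $u=1^k0$ or $u=u'01^k0$, the paper exhibits the explicit overlap $01^k01^k0$ inside $\phi^2(0)$ (respectively $\phi(01^k0)$), which together with $|\phi^n(0)|\to\infty$ gives \textbf{(P3)}. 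My counterexample above is precisely the subcase $u=u'01^k0$ with $k=0$. Your proposal, as written, cannot produce these overlaps, so step (i) is a genuine missing idea rather than a routine verification.
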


\begin{proof}
In the case that $w$ is overlap-free, Theorem~\ref{Theorem:Seebold} shows that $w$ is the Thue-Morse word $M$ or its complement $\widetilde{M}$ \textbf{(P1)}.

Assume that $w$ is not overlap-free and that $w=\phi^\omega(0)$ -- the case that $w=\phi^\omega(1)$ follows by symmetry. Then, for some words $u,v\in\{0,1\}^*$, we have
\begin{equation*}\label{generalform}
\phi(0)=0u \quad \text{and} \quad \phi(1)=v{.}
\end{equation*}
Since $\phi$ is prolongable over $0$, the word $u$ is not the empty word. In particular, $\length{\phi(0)}\geq{2}$. Note that if $u$ consists only $0$'s,\textit{ i.e.}, $u=0^n$ for $n\in\mathbb{N}$, then
$$w=\phi^\omega(0)=0^{{\omega}},$$ 
{where $x^\omega$ is the \textit{periodic} word $xxx\cdots$.} 
Thus, $w$ satisfies \textbf{(P2)}.

\noindent
\textbf{Case I: $v$ {is the empty word $\epsilon$}.}
\vspace{2mm}

 Since $\phi(1)=\epsilon$, applying the morphism to $\phi^k(0)$ will ignore any $1$'s in this sequence. In other words, if $i_k$ is the number of $0$'s that appear in $\phi^k(0)$, then
$$\phi^{k+1}(0)=\left(\phi(0)\right)^{i_k}.$$
Therefore, $i_{k+1}=i_k\cdot i_1=i_1^{k+1}$.
Since $\phi$ is prolongable, $u$ contains the letter $0$ at least once, and so $i_1\geq{2}$. Since $\lim_{k\to\infty}i_k=\infty$,
$$w=\phi^\omega(0)={\left(\phi(0)\right)^{\omega}}.$$
{In this case}, $w$ satisfies \textbf{(P2)}.

\noindent
\textbf{Case II: $v=1^n$.}
\vspace{2mm}

As discussed above, we can assume that $u$ contains the letter $1$ at least once.
If $\phi(1)=1^n$ for some $n\geq{2}$, then $\phi^k(1)=1^{n^{k}}$. Since $\phi(0)$ contains the letter $1$, the word $\phi^{k+1}(0)$ contains the subword $\phi^k(1)$ for all $k\in\mathbb{N}$. Therefore, $w=\phi^\omega(0)$ satisfies \textbf{(P2)}.\\

{\JB{Let $v=1$. Note that if $u$ does not contain the letter $0$, \textit{i.e.}, $u=1^k$, then $$\phi^2(0)=\phi(0)\phi(1^k)=01^{2k},\quad \phi^3(0)=\phi(0)\phi(1^{2k})=01^{3k},  \text{ and } \phi^m(01^k)=01^{mk}.$$ In this case, $\phi^\omega(0)=01^\infty$ and \textbf{(P2)} is satisfied.  Furthermore, if $u=u'01^k$ with $k\in\mathbb{N}$, we note that for all $m\in\mathbb{N}$ $$\phi(01^m)=0u'01^k1^m=0u'01^{(k+m)}.$$ In particular, for all $n\in\mathbb{N}$ the word $\phi^n(u)$ ends in the term $1^{(n+1)k}$. Therefore, $w$ satisfies \textbf{(P2)}, and so we have now reduced our considerations to the cases where $u$ ends in the letter $0$.

If $u=1^k0$ with $k\in\mathbb{N}$, then 
$$
\phi^2(0)=\phi(0)1^k\phi(0)=(01^k0)1^k(01^k0).
$$
Since this word contains the subword $01^k01^k0$ and $\phi$ is prolongable on $0$, the length of $\phi^n(0)$ tends to infinity and \textbf{(P3)} is satisfied.}}

Finally, assume that $u=u'01^k0$ with $k\in\mathbb{Z}_{\geq{0}}$. The word
$$\phi(01^k0)=0u'01^k01^k0u'01^k0$$ contains $01^k01^k0$ as a subword. Since $\phi$ is prolongable on $0$, the length of $\phi^n(0)$ tends to infinity and \textbf{(P3)} is satisfied.
\vspace{2mm}

\noindent
\textbf{Case III: $v$ contains $0$.}
\vspace{2mm}

Again, we can freely assume that $u$ contains the letter $1$. Since $v$ contains the letter $0$, the morphism $\phi$ is primitive: if $v$ contains both $0$ and $1$, it follows by definition; if $v$ only contains the letter $0$, then $\phi^2(1)$ will contain $\phi(0)$ which contains both the letters $0$ and $1$. {Since $\phi$ is primitive $\lim_{n\to\infty}|\phi^n(0)|=\lim_{n\to\infty}|\phi^n(1)|=\infty$.}  These words {contain overlap by previous assumption and, therefore,}  satisfy  \textbf{(P3)}.
\end{proof}

The final result needed to prove Theorem~\ref{Theorem:pure_morphic_2LC} is due to Badziahin and Zorin, which shows that the real number that has the Thue-Morse word (or its complement) as its base-$n$ expansion is well-approximable provided that $n$ is not divisible by $15$. Note that if $n$ is divisible by $15$ the result is unknown, as opposed to being false.

\begin{theorem}[\cite{BZ:2014}]\label{Theorem:TM_bad_approx}
Let $M_n$ be the real number whose base-$n$ expansion is the Thue-Morse word. If {$n$ is not divisible by $15$}, then $M_n$ is well-approximable.
\end{theorem}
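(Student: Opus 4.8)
The plan is to reduce the statement to showing that the regular continued fraction expansion of $M_n$ has \emph{unbounded partial quotients}. Indeed, if $p_k/q_k$ is the $k$-th convergent of a real number $x$ and $a_{k+1}$ the next partial quotient, then $q_k\Vert q_k x\Vert \asymp 1/a_{k+1}$, and since the best rational approximations are exactly the convergents, one has $\liminf_{q\to\infty} q\Vert qx\Vert = 0$ precisely when $\limsup_k a_k = \infty$. Thus it suffices to exhibit, for every bound $B$, a convergent of $M_n$ followed by a partial quotient exceeding $B$, equivalently infinitely many rationals $p/q$ with $q\Vert qM_n\Vert$ tending to $0$. To make the arithmetic tractable I would first pass from $M_n$ to the infinite product $\Pi := \prod_{j\ge 0}(1 - n^{-2^j})$ using the closed form
$$M_n = \frac{1}{2}\left(\frac{1}{n-1} - \frac{1}{n}\,\Pi\right),$$
which follows from writing the Thue--Morse digits as $t_k = (1-\tau_k)/2$ with $\sum_{k\ge 0}\tau_k y^k = \prod_{j\ge 0}(1 - y^{2^j})$. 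Since $M_n$ and $\Pi$ differ by a fixed affine transformation with rational coefficients of bounded denominator, well-approximability of one is equivalent to that of the other.

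The engine of the argument is the self-similarity of the Thue--Morse word, which manifests analytically as the functional equation
$$\prod_{j\ge 0}\bigl(1 - y^{2^j}\bigr) = (1-y)\prod_{j\ge 0}\bigl(1 - (y^2)^{2^j}\bigr),$$
and combinatorially as the fact that the digit string is the fixed point $\mu^\omega(0)$. A first, instructive attempt is to approximate $\Pi$ by its partial products $\Pi_N = \prod_{j=0}^N(1-n^{-2^j})$, which are rationals of denominator $n^{2^{N+1}-1}$. These, however, satisfy only $|\Pi - \Pi_N| \asymp n^{-2^{N+1}} \asymp 1/(nq)$ with $q$ the denominator, so $q\Vert q\Pi\Vert$ does \emph{not} tend to $0$ along this sequence. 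Consequently the genuinely good approximations must come from convergents that are \emph{not} the partial products, and the heart of the proof is to understand the continued fraction expansion of $\Pi$ itself rather than of its truncations.

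To access that expansion I would encode convergents through products of the matrices $\left(\begin{smallmatrix} a & 1 \\ 1 & 0\end{smallmatrix}\right)$ and use the morphic doubling of the digit index set to set up a recursion relating the continuant data at ``level $N+1$'' to that at ``level $N$''. The aim is to propagate an explicit description of the continued fraction of the rational truncations across levels, controlling continuant growth tightly enough to detect, infinitely often, a ratio $q_{k+1}/q_k$ that blows up, equivalently a large partial quotient. This is also where the hypothesis $15\nmid n$ must enter: it should serve as a non-degeneracy condition guaranteeing that the integers produced by the recursion (gcd's of successive numerator and denominator data, and the quantities such as $n^2-1$ and $n^2+1$ governing the doubling step) do not collapse to units in a way that would keep every partial quotient bounded. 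The excluded divisibility by $3$ and $5$ is precisely the arithmetic regime under which the construction cannot be shown to succeed.

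The main obstacle is exactly the feature that makes such results rare: controlling the continued fraction expansion of an automatic irrational explicitly, since continued fractions interact very poorly with base-$n$ and morphic structure. The delicate part will be establishing the precise level-$N$ to level-$(N{+}1)$ continuant recursion and proving that it forces unbounded partial quotients while ruling out the degenerate cancellations; I expect this to require careful bookkeeping of the matrix products together with a finite arithmetic case analysis (possibly computer-assisted for the base cases) to pin down the role of the modulus $15$.
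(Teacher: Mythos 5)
You should know at the outset that the paper does not prove this statement: Theorem~\ref{Theorem:TM_bad_approx} is imported verbatim from Badziahin and Zorin \cite{BZ:2014}, so your attempt can only be compared against their proof. Your preparatory reductions are correct and do match their opening moves: well-approximability of $M_n$ is equivalent to unbounded partial quotients; the identity $M_n=\frac{1}{2}\bigl(\frac{1}{n-1}-\frac{1}{n}\Pi\bigr)$ with $\Pi=\prod_{j\ge 0}(1-n^{-2^j})$ follows correctly from the generating-function identity $\sum_{k\ge 0}(-1)^{t_k}y^k=\prod_{j\ge 0}(1-y^{2^j})$; well-approximability is indeed invariant under affine maps with rational coefficients; and your observation that the truncations $\Pi_N$ only give $q\Vert q\Pi\Vert\asymp q/n\to\infty$, so the good approximations cannot be the partial products, is both right and genuinely instructive.

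The genuine gap is that your proposal stops exactly where the theorem begins. Everything that makes \cite{BZ:2014} a paper is the step you defer: actually constructing convergents that are \emph{not} truncations and proving that infinitely many ratios $q_{k+1}/q_k$ blow up. Your text contains no definition of the level-$N$ to level-$(N+1)$ recursion, exhibits no single large partial quotient, and gives no mechanism ruling out the degenerate collapse you yourself worry about; the key assertions are phrased as aspirations (``the aim is to propagate\dots'', ``it should serve as a non-degeneracy condition'', ``I expect this to require\dots''). Your guess about where $15\nmid n$ enters is also off: it is not a condition on quantities like $n^2\pm 1$ in a doubling step, but arises in Badziahin--Zorin's argument from the continued fraction of the relevant infinite product viewed as a Laurent series over $\mathbb{Q}$, controlled via the functional equation $f(x)=(1-x^{-1})f(x^2)$; the rational coefficients appearing in that functional continued fraction have denominators built from the primes $3$ and $5$, and the specialisation $x=n$ yields large partial quotients only when $n$ is not divisible by both, i.e., when $15\nmid n$. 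Executing that analysis --- the specialisation lemmas relating the functional expansion to the numerical one included --- occupies essentially the whole of their paper and cannot be compressed into ``careful bookkeeping of the matrix products''. As it stands, your proposal is a sensible roadmap pointed in the same direction as the actual proof, but it establishes nothing beyond the elementary reductions.
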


Combining together Proposition~\ref{Proof:Subwords}, Lemma~\ref{Lemma:BinaryPureMorphic}, and Theorem~\ref{Theorem:TM_bad_approx} provides the proof for Theorem~\ref{Theorem:pure_morphic_2LC}.

\begin{proof}[Proof of Theorem~\ref{Theorem:pure_morphic_2LC}]
From Theorem~\ref{Theorem:TM_bad_approx}, $M_2$ is well-approximable and therefore, satisfies 2LC. In this case, $\widetilde{M}_2$ is given by $1-M_2$. Since $M_2$ is well-approximable, so is $\widetilde{M}_2$. Therefore, the real numbers whose base-$2$ expansion  satisfy $\textbf{(P1)}$ satisfy $2$LC. For  words satisfying \textbf{(P2)}, we note that for any periodic word $v$, \textit{i.e.}, $v={X^\omega}$, the real number $v_2$ is rational and therefore, well-approximable. Applying Proposition~\ref{Prop:LimitWord} shows that the real numbers whose base-$2$ expansions satisfy \textbf{(P2)}, also satisfy $2$LC. Finally, Proposition~\ref{Proposition:prim_or_uniform} implies that for any base-$2$ expansion which satisfies \textbf{(P3)}, the corresponding real number satisfies $2$LC.
\end{proof}

\subsubsection{Proof of Corollary~\ref{Cor:pure_morphic_2LC}}

From Corollary~\ref{Corollary:Morphic_subalphabets}, we can extend Theorem~\ref{Theorem:pure_morphic_2LC} to Corollary~\ref{Cor:pure_morphic_2LC}, by showing that for any morphism $\psi:\{a,b\}\to\{a,b\}^*$ which is prolongable on $a$ with $a,b\in\mathcal{A}$ and any coding $\tau:\mathcal{A}\to\{0,1,\ldots,p-1\}$, the real number $\tau(\psi^\omega(a))_p$ satisfies pLC. Note that since $a$ and $b$ are arbitrary letters, we can consider them to be letters in $\{0,1,\ldots,p-1\}$ and forget the coding. By the same argument, the word $\psi^\omega(a)$ can be rewritten as a coding of a pure morphic word $w$ over the alphabet $\{0,1\}$, \textit{i.e.}, $\psi^\omega(a)=\sigma(w)$ where $\sigma(0)=a$ and $\sigma(1)=b$. If $w$ satisfies \textbf{(P2)} or \textbf{(P3)}, then $\psi^\omega(a)_p$ satisfies pLC  using the same arguments as in the proof of Theorem~\ref{Theorem:pure_morphic_2LC}.
When $\psi^\omega(a)$ is a coding of the Thue-Morse word or its complement, the situation is a bit more complicated.

Let $TM(a,b)$ be the coding of $M$, where $0$ is mapped to $a$ and $1$ is mapped to $b$. In order to complete the proof of Corollary~\ref{Cor:pure_morphic_2LC}, we will show that $TM(a,b)_p$ is well-approximable for all primes $p$ and all $a,b\in\{0,1,\ldots,p-1\}$. 

\begin{proposition}\label{TM_general_bad_approx}
Let $a,b\in\{0,1,\ldots,n-1\}$. If {$n$ is not divisible by $15$}, then $TM(a,b)_n$ is well-approximable.
\end{proposition}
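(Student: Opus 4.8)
The plan is to reduce the statement to Theorem~\ref{Theorem:TM_bad_approx} by observing that $TM(a,b)_n$ differs from $M_n$ only by a rational affine change of variables, and that such changes preserve well-approximability. Writing $M=m_1m_2m_3\cdots$ with $m_i\in\{0,1\}$, the $i$-th digit of $TM(a,b)$ equals $a+(b-a)m_i$, so summing the resulting series gives
\[
TM(a,b)_n=\sum_{i=1}^\infty\frac{a+(b-a)m_i}{n^i}=\frac{a}{n-1}+(b-a)M_n.
\]
Hence $TM(a,b)_n=\alpha M_n+\beta$ with $\alpha=b-a\in\z$ and $\beta=a/(n-1)\in\q$. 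I would first dispatch the degenerate case $a=b$: here $\alpha=0$ and $TM(a,b)_n=a/(n-1)$ is rational, hence well-approximable (taking denominators $q=k(n-1)$ with $k\to\infty$ gives $q\cdot\Vert q\cdot a/(n-1)\Vert=0$).

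For $a\neq b$, the core step is an auxiliary lemma: if $z$ is well-approximable and $\alpha\neq0$, $\beta$ are rational, then $\alpha z+\beta$ is well-approximable. I would write $\alpha=r/s$ and $\beta=t/u$ with $s,u\in\n$, take a sequence $q_k\to\infty$ with $q_k\Vert q_kz\Vert\to0$ (which exists by well-approximability of $z$), and set $Q_k=su\,q_k$. Then
\[
Q_k(\alpha z+\beta)=ur\,q_k z+st\,q_k,
\]
and since $st\,q_k\in\z$ we obtain $\Vert Q_k(\alpha z+\beta)\Vert=\Vert ur\,q_kz\Vert\le|ur|\cdot\Vert q_kz\Vert$, using the elementary bound $\Vert mz\Vert\le|m|\,\Vert z\Vert$ valid for every $m\in\z$. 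Consequently
\[
Q_k\cdot\Vert Q_k(\alpha z+\beta)\Vert\le su|ur|\cdot q_k\Vert q_kz\Vert\longrightarrow0,
\]
while $Q_k\to\infty$, so $\alpha z+\beta$ is well-approximable.

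Finally I would combine the pieces: since $n$ is not divisible by $15$, Theorem~\ref{Theorem:TM_bad_approx} guarantees that $M_n$ is well-approximable, and applying the lemma with $z=M_n$, $\alpha=b-a\neq0$ and $\beta=a/(n-1)$ shows that $TM(a,b)_n$ is well-approximable. This is essentially the entire argument, and there is no deep obstacle: the only point requiring care is the bookkeeping of denominators in the lemma — choosing $Q_k$ divisible by both $s$ and $u$ is exactly what clears $\beta$ into an integer and reduces $\Vert Q_k(\alpha z+\beta)\Vert$ to a multiple of $\Vert q_kz\Vert$, after which the inequality $\Vert mz\Vert\le|m|\,\Vert z\Vert$ finishes the estimate. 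All of the genuinely hard analytic content, namely that the Thue--Morse number itself is well-approximable, is already supplied by Theorem~\ref{Theorem:TM_bad_approx}.
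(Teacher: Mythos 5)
Your proof is correct, and at bottom it performs the same reduction as the paper: $TM(a,b)_n$ is a rational affine image of $M_n$, rational affine maps with nonzero linear part preserve well-approximability, and Theorem~\ref{Theorem:TM_bad_approx} supplies the rest. The difference is in how the affine relationship is established. The paper derives it through a chain of coding identities --- $r\cdot TM(0,1)_n=TM(0,r)_n$, then $1-TM(0,n-1)_n=TM(n-1,0)_n$, then $TM(0,k)_n+\ell/(n-1)=TM(\ell,\ell+k)_n$, and so on --- each step giving an equivalence of well-approximability, while the underlying principle that adding a rational or multiplying by a rational constant preserves well-approximability is asserted without proof. You instead collapse the whole chain into the single identity $TM(a,b)_n=\frac{a}{n-1}+(b-a)M_n$, which subsumes every one of the paper's intermediate identities (composing the paper's chain reproduces exactly this formula), and you actually prove the preservation lemma, with the choice $Q_k=su\,q_k$ clearing the denominators and the elementary bound $\Vert mz\Vert\le|m|\,\Vert z\Vert$ finishing the estimate. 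Your route is shorter, avoids the side conditions the paper's identities carry (such as $\ell\le n-1-k$), and makes explicit the one analytic ingredient the paper leaves implicit; its only cost is that you must treat the degenerate case $a=b$ separately, which you do correctly since $a/(n-1)$ is rational. Both arguments place all of the genuine difficulty in Theorem~\ref{Theorem:TM_bad_approx}.
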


\begin{proof}
We start this proof by noting that if a real number $x$ is well-approximable, then adding a rational number $p/q$ or multiplying by a rational constant will preserve this property. In particular, $TM(0,1)_n$ is well-approximable if and only if $r\cdot TM(0,1)_n$ is well-approximable for all $r\in\mathbb{Q}$. {If we restrict $r$ to} $\{0,1,\ldots,n-1\}$, then the base-$n$ expansion of $TM(0,r)_n$ is
$$r\cdot TM(0,1)_n=r\cdot\sum\limits_{i=1}^{\infty} \frac{\sigma(i)}{n^i} =  \sum\limits_{i=1}^{\infty} \frac{r\cdot\sigma(i)}{n^i} = TM(0,r)_n,$$ where $\sigma(i)$ returns the $i$-th letter in the Thue-Morse word.

Similarly, $TM(0,n-1)_n$ is well-approximable if and only if $TM(n-1,0)_n$ is well-approximable. This follows from the following observation:
$$1- TM(0,n-1)_n=1-\sum\limits_{i=1}^{\infty} \frac{(n-1)\cdot\sigma(i)}{n^i}=\sum\limits_{i=1}^{\infty} \frac{(n-1)\cdot(1-\sigma(i))}{n^i}=TM(n-1,0)_n.
$$
Multiplying by $k/(n-1)$ shows that the number $TM(k,0)_n$ is well-approximable for all $k\in\{0,1,\ldots,n-1\}$ if and only if $TM({n-1,0})_n$ is well-approximable.

Furthermore, we note that for $\ell\in\{0,1,\ldots,n-1\}$ the real number whose base-$n$ expansion is an infinite string of $\ell$'s corresponds to the rational number $\ell/(n-1)$. Therefore, if $\ell\leq n-1-k$, then
$$TM(0,k)_n+\frac{\ell}{n-1}=\sum\limits_{i=1}^{\infty} \frac{k\cdot\sigma(i)}{n^i} +\sum\limits_{i=1}^{\infty} \frac{\ell}{n^i}=\sum\limits_{i=1}^{\infty} \frac{k\cdot\sigma(i)+\ell}{n^i}=TM(\ell,\ell+k)_n.$$
Likewise, $TM(k,0)_n+\ell/(n-1)=TM(k+\ell,\ell)_n$. This, combined with the previous arguments, shows that for all $a,b\in\{0,1,\ldots,n-1\}$ the real number $TM(a,b)_n$ is well-approximable if and only if $TM(0,1)_n$ is well-approximable. Applying Theorem~\ref{Theorem:TM_bad_approx} completes the proof.
\end{proof}

\subsection{Proof of Theorem~\ref{Theorem:bounded_overlap_free_2LC}}\label{Proof:bounded_overlap_free_2LC}

Let $\mu$ be the Thue-Morse morphism. In order to prove Theorem~\ref{Theorem:bounded_overlap_free_2LC}, we will use Proposition~\ref{Proof:Subwords}, Theorem~\ref{Theorem:TM_bad_approx}, and  the following two lemmas.

\begin{lemma}\label{Lemma:finiteoverlapfree}
For every overlap-free word $x\in\{0,1\}^*$, there exist words $u,v,y\in\{0,1\}^*$ with $\length{u},\length{v}\leq{2}$ and $x=u\mu(y)v$.
\end{lemma}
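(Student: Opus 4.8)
The plan is to prove this as a structural decomposition result about overlap-free binary words, reducing an arbitrary such word to an image under the Thue--Morse morphism $\mu$ after stripping a bounded-length prefix and suffix. The key classical fact I would invoke is the well-known rigidity of overlap-free binary words: any overlap-free word of length at least $4$ (or some small threshold) factors essentially uniquely as a $\mu$-image bordered by short blocks. Concretely, I would proceed by first handling the short cases $\length{x}\leq 4$ directly, where we can simply take $y=\epsilon$ and split $x$ between $u$ and $v$ so that each has length at most $2$; this disposes of the base of the argument and is purely a finite check.

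For the main case $\length{x}\geq 5$, the central step is to locate the ``phase'' of $x$ relative to the block structure imposed by $\mu$. Since $\mu(0)=01$ and $\mu(1)=10$, every $\mu$-image is a concatenation of the blocks $01$ and $10$ aligned on even positions. The crucial combinatorial lemma I would use (this is the standard synchronization property of the Thue--Morse morphism on overlap-free words) is that in any overlap-free binary word the factors $00$ and $11$ can only occur straddling a block boundary, and moreover the positions of these ``bad pairs'' determine the parity of the block decomposition unambiguously once the word is long enough. First I would scan $x$ to find an occurrence of $00$ or $11$; such an occurrence pins down whether the $\mu$-blocks start at even or odd offset within $x$. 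Having fixed the parity, I would peel off at most the first two letters (those lying before the first full aligned block) as the prefix $u$, peel off at most the last two letters (a possibly incomplete trailing block) as the suffix $v$, and then verify that the central portion $x'$ sits on block boundaries and consists entirely of the blocks $01$ and $10$. Reading off one letter per block via the inverse $\mu^{-1}$ then produces the word $y$ with $x'=\mu(y)$.

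The step I expect to be the main obstacle is the case where $x$ contains \emph{no} occurrence of $00$ or $11$ at all, since then there is no ``bad pair'' to anchor the phase. In that situation $x$ is a factor of the periodic word $(01)^\omega$ or $(10)^\omega$, i.e. an alternating string; but any alternating binary string of length $\geq 5$ contains $ababa$ with $a,b$ single letters and $X=b$, namely an overlap of the form $uXuXu$ with $u$ a single letter, contradicting overlap-freeness. Hence this degenerate case cannot arise for $\length{x}\geq 5$, which is precisely why the threshold is needed. I would need to check carefully that the residual ambiguity in choosing the phase (when the first bad pair is close to the start) never forces $u$ or $v$ to exceed length $2$; this bound of $2$ is tight because a trailing block may be incomplete and the leading alignment may be off by one block-half, each contributing at most two letters. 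The verification that $x'$ genuinely lies in $\mu(\{0,1\}^*)$ once the parity is fixed rests on the no-$00$/no-$11$-on-even-boundaries structure, which follows from overlap-freeness; I would state this synchronization property cleanly and then the decomposition is immediate.
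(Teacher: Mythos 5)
A preliminary remark: the paper does not prove this lemma at all; it imports it from the literature, deferring to \cite[Theorem~6.4]{Kobayashi:1986} or \cite[Lemma~3]{ACS:1998}, whose proofs are genuine case analyses. Your sketch follows the same standard strategy as those sources, and your two easy regimes are handled correctly: for $|x|\leq 4$ take $y=\epsilon$ and split $x$ between $u$ and $v$, and a word of length at least $5$ containing neither $00$ nor $11$ is alternating, hence contains $01010$ or $10101$. The genuine gap is the ``synchronization property'' that carries all the weight: as you state it, it is false. The word $x=00100$ is overlap-free (its length-$3$ factors are $001,010,100$, so no cube; its only length-$5$ factor is $x$ itself, which fails the pattern $uXuXu$ with $|X|=1$ since $x_1\neq x_3$; and any overlap with $|X|\geq 2$ has length at least $7$), yet $00$ occurs starting at positions $1$ and $4$, of opposite parity. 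Hence in any factorization $x=u\mu(y)v$ at least one of these two ``bad pairs'' does \emph{not} straddle a boundary between $\mu$-blocks; it must be swallowed by $u$ or $v$ (indeed $x=0\cdot\mu(0)\cdot 00$ or $x=00\cdot\mu(1)\cdot 0$). Your procedure fails concretely here: anchoring the phase at the first occurrence of $00$ aligns blocks at positions $2$--$3$ and $4$--$5$, and the aligned pair at $4$--$5$ is $00$, which is not a $\mu$-block, so the verification step you assert ``follows from overlap-freeness'' does not go through. Nor is this confined to very short words: $001001100=00\cdot\mu(101)\cdot 0$ is overlap-free, and its first bad pair (position $1$) is inconsistent with the phase determined by all the later ones, so first-occurrence anchoring picks the wrong phase.

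What is actually true---and what constitutes the real content of the lemma---is that phase-inconsistent bad pairs can occur only within bounded distance of the two ends of $x$, where they can be absorbed into $u$ and $v$. For instance, if $00100$ occurs with two letters of context on its left, overlap-freeness forces that context to be $11$ (a left neighbour $0$ gives the cube $000$, and $0$ two places left gives the overlap $0100100$), and then the word cannot be extended to the right at all: appending $0$ creates the cube $000$, appending $1$ creates the overlap $1001001$. So the anomalous pattern is trapped at an end of the word. Establishing this confinement, for all configurations, is exactly the case analysis carried out in the cited references, and it is precisely the part your proposal defers to as a ``standard'' fact plus ``careful checking,'' stated in a form the examples above refute. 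As written, the proposal assumes the crux of the lemma rather than proving it.
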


\begin{lemma}\label{Lemma:overlapfreeiff}
Let $y\in\{0,1\}^*$. Then $y$ is overlap-free if and only if $\mu(y)$ is overlap-free.
\end{lemma}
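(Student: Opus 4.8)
The plan is to prove both implications by contraposition, treating the two directions quite asymmetrically. The reverse direction is immediate: if $y$ contains an overlap $uXuXu$ with $u$ a single letter, then applying $\mu$ yields the factor $\mu(u)\mu(X)\mu(u)\mu(X)\mu(u)$ of $\mu(y)$. Writing $\mu(u)=ab$ with $a\neq b$ and $Y=\mu(X)$, this factor equals $abYabYab$, which has the prefix $a(bY)a(bY)a$; this is an overlap with single repeated letter $a$, so $\mu(y)$ is not overlap-free. Hence $\mu(y)$ overlap-free implies $y$ overlap-free.

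For the forward direction I would show the contrapositive: if $\mu(y)$ contains an overlap then so does $y$. Here I would exploit the block structure $\mu(y)=B_1B_2\cdots$ with each $B_i=\mu(y_i)\in\{01,10\}$. The single structural fact needed is the \emph{synchronisation property}: since each block consists of two distinct letters, writing $w=\mu(y)=w_1w_2\cdots$ we have $w_{2i-1}\neq w_{2i}$ for every $i$; consequently any equal adjacent pair $w_t=w_{t+1}$ forces $t$ to be even, and each block is determined by its first (odd-indexed) letter. I would isolate this as a preliminary observation and drive the rest of the argument from it.

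Now suppose $w$ contains an overlap $O=aZaZa$ with $a$ a letter and period $p=\length{aZ}$, occupying positions $j,\dots,j+2p$, so that $w_{s}=w_{s+p}$ for $j\le s\le j+p$. First, $p\neq 1$, since $w$ contains no factor $aaa$: that would force equal adjacent pairs at two consecutive indices, one of them odd, contradicting synchronisation. Next I would argue $p$ is even. An overlap of odd period $p\ge 3$ necessarily contains an equal adjacent pair $w_t=w_{t+1}$ inside its matched range, since a word of period $p$ that avoids equal adjacent pairs is alternating, hence has period $2$, incompatible with odd $p\ge 3$; but then periodicity gives $w_{t+p}=w_{t+1+p}$, and as $p$ is odd exactly one of $t,t+p$ is odd, producing an equal adjacent pair at an odd index and again contradicting synchronisation. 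With $p=2p'$ even I would finally desubstitute, after realigning: if $j$ is even, the block-partners $w_{j-1}$ and $w_{j+p-1}$ are both equal to the letter distinct from $a$, so the overlap extends one position to the left and may be taken to start at the odd index $j-1$. Thus one may assume $j$ is odd, i.e.\ $O$ begins at a block boundary; then for $0\le s\le p'$ the blocks $B_{m+s}$ and $B_{m+s+p'}$, with $m=(j+1)/2$, share their first letter by periodicity, hence coincide, giving $y_{m+s}=y_{m+s+p'}$. This exhibits an overlap of period $p'$ and length $2p'+1$ in $y$, completing the contrapositive.

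The routine parts are the periodicity bookkeeping and the easy direction. The main obstacle is the forward direction's two delicate steps: ruling out odd periods, and the realignment that lets one assume the overlap is block-aligned before desubstituting. Both steps rest entirely on the synchronisation property $w_{2i-1}\neq w_{2i}$, so establishing and repeatedly invoking that property is where the real work lies.
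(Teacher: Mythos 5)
The paper never proves this lemma: it is imported wholesale from the literature, with a pointer to \cite[Lemma~1.7.4]{AS:2003}, so there is no internal argument to compare against and your proposal must be judged as a stand-alone proof. It is correct, and it is essentially the classical desubstitution argument (going back to Thue) that underlies the cited result. The reverse direction is exactly right: $\mu(uXuXu)=abYabYab$ contains the overlap $a(bY)a(bY)a$. In the forward direction, your synchronisation property $w_{2i-1}\neq w_{2i}$ is the correct engine: it rules out period $1$; together with the parity argument it rules out odd periods $p\geq 3$, though the phrase ``incompatible with odd $p\geq 3$'' deserves one explicit line (if the matched range $w_j\cdots w_{j+p}$ were alternating, then $p$ odd would force $w_j\neq w_{j+p}$, contradicting $w_j=w_{j+p}=a$); and the realignment when $j$ is even, followed by block-by-block desubstitution at a block boundary, is sound. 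One point you leave implicit: for $s=p'$ the block $B_{m+2p'}$ protrudes one letter beyond the end of the overlap, but it still exists inside $w$ (the overlap ends at an odd position while $|w|=2|y|$ is even, so $w$ continues for at least one more letter) and its identity is fixed by its first letter $w_{j+2p}$, which lies inside the overlap, so the desubstitution closes and yields the overlap $y_m\cdots y_{m+2p'}$ of period $p'$ in $y$. The trade-off between the two routes is clear: the paper's citation keeps the note short, while your argument makes the result self-contained, at the cost of roughly a page and the two small patches noted above.
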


For Lemma~\ref{Lemma:finiteoverlapfree}, see \cite[Theorem~6.4]{Kobayashi:1986} or \cite[Lemma~3]{ACS:1998}. For Lemma~\ref{Lemma:overlapfreeiff}, see \cite[Lemma~1.7.4]{AS:2003}.

\begin{proof}[Proof of Theorem~\ref{Theorem:bounded_overlap_free_2LC}]
 In order to prove this result, we will show that every overlap-free base-$2$ expansion of length $K$ contains a prefix of $M$ or $\widetilde{M}$ of length $p(K)$, where $\lim_{K\to\infty}p(K)=\infty$. The result then follows from Proposition~\ref{Prop:LimitWord} and Theorem~\ref{Theorem:TM_bad_approx}. 

Let $x$ be an overlap-free word of length $K$. By Lemma~\ref{Lemma:finiteoverlapfree}, there exist words $u_1,v_1,y_1\in\{0,1\}^*$ with $\length{u_1},\length{v_1}\leq{2}$ and $x=u_1\mu(y_1)v_1$. Using this construction, we can conclude that $\length{\mu(y_1)}={K-\length{u}-\length{v}}\geq{}K-4$. Furthermore, since $\mu$ is $2$-uniform, \textit{i.e.}, $\length{\mu(0)}=\length{\mu(1)}=2$, the length of $y_1$ is equal to $\length{\mu(y_1)}/2$. Provided that $K-4\geq{1}$, we also have that $y_1$ is not the empty word.

Since $x$ is overlap-free, it follows that $\mu(y_1)$ is overlap-free. Additionally, Lemma~\ref{Lemma:overlapfreeiff} implies that $y_1$ is  overlap-free. As a result, there exist $u_2,v_2,y_2\in\{0,1\}^*$ with $\length{u_2},\length{v_2}\leq{}2$ such that $y_1=u_2\mu(y_2)v_2$. Then $x$ can be rewritten as
\begin{equation*}
x=u_1\mu(y_1)v_1=u_1\mu(u_2)\mu^2(y_2)\mu(v_2)v_1.
\end{equation*}
{ The length of $y_2$ is bounded as follows:}
\begin{equation*}
\frac{\length{y_1}-4}{2}\leq |y_2|\leq \frac{
|y_1|}{2}
\end{equation*}

{More generally, for any $k\in\mathbb{N}$, the subword $y_{k}$ can be rewritten as}
$$y_k=u_{k+1}\mu(y_{k+1})v_{k+1},$$ 
{where $u_{k+1},v_{k+1},y_{k+1}\in\{0,1\}^*$, $|u_{k+1}|,|v_{k+1}|\leq{2}$ and} $$\frac{|y_k|-4}{2}\leq|y_{k+1}|\leq \frac{|y_k|}{2}.$$ {Note that  $u_{k+1},v_{k+1}$ and $y_{k+1}$ can all be the empty word.}

{Using this substitution, $x$ can be rewritten in terms of $y_{k}$ as}
\begin{equation*}
x=u_1\mu(u_2)\ldots\mu^{k-1}(u_k)\mu^k(y_k)\mu^{k-1}(v_k)\ldots\mu(v_2)v_1,
\end{equation*}
%
%

{where the length of $y_k$ is bounded below:}
\begin{equation}\label{TMprefix}
\length{y_k}\geq{\frac{K-4\cdot(2^{k}-1)}{2^k}}.
\end{equation}

From (\ref{TMprefix}), the word $y_k$ is non-empty provided that $K-4\cdot(2^k-1)>{0}$. By rearranging, the largest value of $k$ that guarantees that $y_k$ is non-empty is $k=\lfloor\log_2(K+4)\rfloor-2$. For such a value of $k$, let $a$ be any subword of $y_k$ of length $1$. Then $\mu^k(a)$ is a prefix of either $M$ or $\widetilde{M}$. Since $\mu$ is 2-uniform and $\length{a}={1}$, it follows that the length of this prefix is
\begin{align*}
\length{\mu^k(a)} &\geq{2^k}\\
&\geq{2^{(\log_2(K+4)-3)}}\\
&=\frac{K+4}{8}.
\end{align*}
Since $\lim_{K\to\infty}(K+4)/8=\infty$, the result follows.
\end{proof}

{\subsection*{Acknowledgement}
We thank the referee for a thorough reading and helpful comments.
}

{\subsection*{Funding}
Dr Blackman was supported by the Engineering and Physical Sciences Research Council (EPSRC) [grant no. EP/W006863/1] awarded through the University of Liverpool. Prof. Kristensen's research was supported by the Independent Research Fund Denmark (Grant ref. 1026-00081B) and Aarhus University Research Foundation (Grant ref. AUFF-E-2021-9-20). Dr Northey was supported by the Engineering and Physical Sciences Research Council  [grant no. EP/RF060349] awarded through Durham University.
}

\end{document}